\newtheorem{theorem}{Theorem}[section]
\newtheorem{lemma}[theorem]{Lemma}
\newtheorem{proposition}[theorem]{Proposition}
\newcommand\RR{{{\mathbb R}}}
\newcommand{\reff}[1]{(\ref{#1})}
\newcommand{\p}{\partial}
\newcommand{\La}{\Delta}
\newcommand{\beq}{\begin{eqnarray*}}
\newcommand{\enq}{\end{eqnarray*}}
\newcommand{\beqq}{\begin{eqnarray}}
\newcommand{\enqq}{\end{eqnarray}}
\newcommand{\ben}{\begin{equation}\label}
\newcommand{\enn}{\end{equation}}
\newcommand{\bef}{\begin{proof}}
\newcommand{\enf}{\end{proof}}
\title{Effective viscosity of a polydispersed suspension}
\author{Matthieu Hillairet}
\address{Institut Montpelli\'erain Alexander Grothendieck, CNRS, Univ. Montpellier, France}
\email{matthieu.hillairet@umontpellier.fr}
\author{Di Wu}
\address{School of Mathematical Sciences, Peking University, Beijing 100871, P.R. China.}
\email{wudi@math.pku.edu.cn}
\date{\today}
\begin{document}

\maketitle

\begin{abstract}
We compute the first order correction of the effective viscosity for a suspension containing solid particles with arbitrary shapes. We rewrite the computation as an homogenization problem for the Stokes equations in a perforated domain.
Then, we extend the method of reflections \cite{Velazquez,Niet-Schub} to approximate the solution to the Stokes problem with a fixed number of particles. By obtaining sharp estimates, we are able to prove that this method converges for small volume fraction of the solid phase whatever the number of particles. This allows to address the limit when the number of particles diverges while their radius tends to $0.$ We obtain a system of PDEs similar to the Stokes system with a supplementary term in the viscosity proportional to the volume fraction of the solid phase in the mixture. 
\end{abstract}

\section{Introduction}

When a viscous fluid transports solid particles, the particles modify in return the properties of the fluid. For instance, the rheological properties of the fluid are altered.
In his seminal paper \cite{Einstein}, Einstein  addresses the computation of the effective viscosity of the mixture, having in mind it could help recovering the size  of the transported particles. He obtains then the formula: 
\begin{equation} \label{eq_einstein}
\mu_{eff} = \mu \left(1 + \dfrac{5}{2}\phi + o(\phi)\right).
\end{equation}
Here $\mu$ stands for the (bulk) viscosity of the incompressible fluid alone, 
$\mu_{eff}$ denotes the viscosity of the mixture and $\phi$ stands for the 
volume fraction of the solid suspension of spheres. Einstein formula has been the subject of numerous studies:  analysis of Einstein "formal" computations \cite{Almog-Brenner,Keller-Rubenfeld,Ammari,Haines-Mazzucato}, computation of second order expansion \cite{Hinch,Batchelor-Green,GVH}. We refer the reader also to 
\cite{JeffreyAcrivos} for a comprehensive picture on the possible phenomena influencing the effective viscosity of a suspension. Most of these studies  consider homogeneous suspensions.  However, as mentioned in \cite{JeffreyAcrivos}, a formula for the effective viscosity depending only on the volume fraction is hopeless to describe general suspensions, the factor $5/2$ in the above formula being in particular valid for a suspension of spheres {\em a priori}. In this paper, we provide a method for the computation of an effective viscosity allowing a distribution of shapes for the particles in the suspension.

\medskip

A second motivation of the paper is to obtain a "local" formula for the effective viscosity similar to \cite{Almog-Brenner,Niet-Schub}. To be more precise, we rephrase now the computation of an effective viscosity, as depicted in \cite{Batchelor},  into a homogenization problem. We consider an incompressible newtonian fluid occupying the whole space $\mathbb R^3$ and transporting a cloud made of $N$ particles. We neglect the particle and 
fluid inertia so that computing an effective viscosity amounts to understand 
the behavior of the system when it is submitted to a strain flow $x \mapsto Ax$
(where $A$ is a symmetric trace-free matrix). This reduces to the following
{\em stationary}  problem. We  denote by $(u,p)$ the fluid velocity-field/pressure.
The domain of the $l$-th solid particle is the smooth bounded open set $B_l \subset \mathbb R^3$ and its center of mass is $x_l.$ The motion of $B_l$ is associated to a pair of translational/rotational velocities $(V_l,\omega_l)$. Introducing $\mu$ the viscosity of the fluid, the unknowns $(u,p,(V_l,\omega_l)_{l=1,\ldots,N})$ are computed by solving the problem
\begin{align}
\label{eq_Stokes}
& \left\{
\begin{aligned}
- {\rm div}\, \Sigma_{\mu}(u,p) &= 0 \\
{\rm div}\, u &= 0 
\end{aligned}
\right.
\qquad \qquad
\text{ in $\mathbb R^3 \setminus \bigcup_{l=1}^N \overline{B}_l,$}
\\
\label{bc_Stokes}
& \left\{
\begin{aligned}
u(x) &= V_l + \omega_l \times (x-x_l) && \text{ on $\partial B_l,$ for $l=1,\ldots,N$} \\ 
 u(x) &= Ax && \text{ at infinity,}
\end{aligned}
\right.
\\
\label{eq_Newton}
& \left\{
\begin{aligned}
\int_{\partial B_l} \Sigma_{\mu}(u,p) n {\rm d}s &= 0 \\
\int_{\partial B_l} (x-x_l) \times \Sigma_{\mu}(u,p) n {\rm d}s &= 0 \\
\end{aligned}
\right.
\qquad \qquad  \text{for }\, l = 1,\ldots,N.
\end{align}
In this system, we introduced the fluid stress-tensor $\Sigma_{\mu}(u,p).$
Under the assumption that the fluid is newtonian, it reads:
\[
\Sigma_{\mu}(u,p) = 2\mu D(u) - p \mathbb I_{3} = \mu (\nabla u + \nabla^{\top} u) - p \mathbb I_3.
\]
The zero source terms on the right-hand side of the first equation in \eqref{eq_Stokes} and both equations of  \eqref{eq_Newton} are reminiscent of the intertialess assumption. The second equation of \eqref{bc_Stokes} must be understood as
\[
\lim_{x \to \infty} |u(x) - Ax| = 0.
\]
In the last equations \eqref{eq_Newton} the symbol $n$ stands for the normal to 
$\partial B_l.$ By convention, we assume that it points inwards the solid $B_l$
and outwards the fluid domain that we denote $\mathcal F_N$ in what follows:
\[
\mathcal F_N =  \mathbb R^3 \setminus \bigcup_{l=1}^N \overline{B}_l.
\]

Under the assumption that the $B_l$ do not overlap, existence/uniqueness of a solution to \eqref{eq_Stokes}-\eqref{bc_Stokes}-\eqref{eq_Newton} falls into the scope of the classical theory for the Stokes equations (see \cite[Section V]{Galdi}).  We give a little more details in the  next section. We only mention here that the pressure is unique up to a constant. But, this has no impact on our computations and we consider the pressure as being uniquely defined below (this problem could be fixed by assuming that one mean of the pressure has  a fixed value). Our aim is to tackle the asymptotics of this solution when the $B_l$ are small and many. To make this statement quantitative, we introduce further assumptions regarding the $B_l$. 
Namely, we assume that there exists a diameter $a>0,$ centers $x_l \in \mathbb R^3$ and shapes $\mathcal B_l$ (meaning smooth bounded connected open sets of $\mathbb R^3$) such that
\begin{equation} \tag{H1} \label{H1}
\mathcal B_l \subset B(0,1), \qquad \int_{\mathcal B_l} x{\rm d}x = 0, \qquad 
B_l = x_l + a \mathcal B_l, \qquad \forall \, l=1,\ldots,N.
\end{equation}
Then, we prescribe that the solid domains remain in a compact set $K$ and that there is no-overlap between the particles: 
\begin{equation} \tag{H2} \label{H2}
B_l \subset K \quad \forall \, l =1,\ldots,N \,, \qquad d := \min_{l\neq \lambda} |x_l-x_\lambda| > 4 a.
\end{equation}
With these conventions, we note that the total volume of the solid phase
is at most $4Na^3\pi/3$ so that, globally, in the volume $K,$ the volume fraction of the solid phase is controlled by $4Na^3\pi/3|K|.$ However, the separation assumption \eqref{H2} implies  that we have also  a uniform local control of the solid phase volume fraction by $a^3/d^3.$ We use also
constantly below that, with \eqref{H1}-\eqref{H2}, we obtain $N \leq C|K|/ d^{3}.$

\medskip

In order to derive an effective viscosity for the mixture, the classical point of view proposed in \cite{Einstein,Batchelor} is to compute the rate of work of the viscous stress tensor on the boundary $\partial K$ of the domain $K$ containing the solid particles: 
\[
W_{eff} := \int_{\partial K} \Sigma_{\mu}(u,p) n \cdot Ax {\rm d}s 
\]
and to compare the excess with respect to the value $W_0 = 2 \mu A:A |K|$
 that would yield in case there is no particle. In brief, the analysis of Einstein -- in the case
 the $B_l$ are spheres of radius $a$ filling a bounded domain -- relies on splitting the solution $(u,p)$ into $u = u_0+u_1,$ $p=p_0+p_1.$ Here $(u_0,p_0)$ is the pure strain applied on the boundaries at infinity:
 \[
 u_0(x) = Ax \qquad p_0(x) = 0,
 \] 
 (this is a solution to the Stokes equations on $\mathbb R^3$ since $A$ is trace-free) while the term $(u_1,p_1)$ compensates the trace of the boundary conditions $u-u_0(x) = -Ax$ on the $B_l$ that cannot be
matched by a suitable pair $(V_l,\omega_l)$ in \eqref{bc_Stokes}. Namely, one may write:
\[
u(x)-u_0(x) = Ax_l - A(x-x_l) \qquad \text{ on $\partial B_l$}.
\]  
Since $A$ is symmetric the latter linear term in the boundary condition cannot be compensated by a rigid rotation. Under the assumption that the holes are well-separated one provides the approximation:
\[
u_1(x) = \sum_{l=1}^{N} U^a[A](x-x_l), \qquad p_1(x) = \sum_{l=1}^N P^a[A](x-x_l),
\]
where $(U^a[A],P^a[A])$ is the solution to the Stokes problem \eqref{eq_Stokes} outside $B(0,a)$ with boundary
condition $U^a(x) = -Ax$ on $\partial B(0,a)$ (and vanishing boundary conditions at infinity). With this formula at-hand, one obtains that 
\[
W_{eff}  = W_0 + \sum_{l=1}^N \int_{\partial K} \Sigma_{\mu}(U^a[A](\cdot -x_l),P^a[A](\cdot-x_l))n \cdot Ax{\rm d}s.
\]
Via conservation arguments related to the divergence form of the Stokes equation, the boundary
integrals involved in $W_{eff}$ can be transformed into $N$ integrals over the boundaries of $B(x_l,a).$
It is then possible to apply the explicit value of the solution $(U^a,P^a).$
Summing the contributions of all the particles leads finally to the first order expansion:
\[
W_{eff} =  2 \mu A:A \left( |K| + \dfrac{1}{2}  \dfrac{20\pi Na^3}{3} \right), 
\]
leading to formula \eqref{eq_einstein}.
We refer the reader to \cite[p.246]{Batchelor}  for more details on this computation.
 
\medskip

 Herein, we show that solutions to \eqref{eq_Stokes}-\eqref{bc_Stokes}-\eqref{eq_Newton}  are close to solutions to the continuous analogue:
 \begin{align} \label{eq_continu}
  & \left\{  
 \begin{aligned}
 - {\rm div} (2\mu [(1+ \mathbb M_{eff})(D(u))]  - p \mathbb{I}_3) &=& 0 \\
 {\rm div} u &=& 0 
 \end{aligned}
 \right. && \text{ on $\mathbb R^3$}, \\ \label{bc_continu}
  & \quad u(x) = A x && \text{ at infinity}. 
 \end{align}
Here the symbol $(1+\mathbb M_{eff})(D(u))$ stands for $D(u) + \mathbb {M}_{eff}(D(u)),$ with $\mathbb{M}_{eff}$ an application that maps $D(u)$  to the $3\times 3$  matrix $\mathbb{M}_{eff}(D(u))$. This linear mapping measures the collective reaction of the particles to the strain induced by $D(u).$
We emphasize that we allow this mapping to depend on the space variable $x.$ 
To be more precise, we explain now the computation of $\mathbb M_{eff}.$ 
For arbitrary $l \in \{1,\ldots,N\}$ let denote by $(U[A,B_l],P[A,B_l])$ the unique solution 
to 
\begin{align} \label{eq_Stokeslocal}
& \left\{
\begin{aligned}
- {\rm div \Sigma(u,p)}&=& 0  && \text{ in $\mathbb R^3 \setminus \overline{B_l}$},\\
{\rm div} u &=& 0 &&\text{ in $\mathbb R^3 \setminus \overline{B_l}$},
\end{aligned}
\right.
&& 
\left\{
\begin{aligned}
& u(x) = -Ax + V + \omega \times x && \text{ on $\partial B_l$}, \\
& u(x) = 0 && \text{ at infinity },
\end{aligned}
\right. 
\\ \label{eq_Newtonlocal}
& \int_{\partial B_l} \Sigma(u,p)n{\rm d}\sigma = 0 &&
 \int_{\partial B_l} (x-x_l) \times \Sigma(u,p)n{\rm d}\sigma = 0. 
\end{align}
In this system, we have $\mu=1$ but we drop the index $1$ of the symbol $\Sigma$ for legibility.
We note that $(V,\omega)$ are also unknowns in this problem. But they are the lagrange multipliers of the constraint \eqref{eq_Newtonlocal}, so that we may retain only $(U[A,B_l],P[A,B_l])$ as the solution. We associate to this solution:
\[
\mathbb M[A,B_l] := \mathbb P_{3,\sigma} \left[  \int_{\partial \mathcal B_l} - \Sigma(U[A,B_l],P[A,B_l])n \otimes (x-x_l) + 2 U[A,B_l] \otimes n {\rm d}s\right],
\]
where $ \mathbb P_{3,\sigma}$ stands for the orthogonal projection (w.r.t. matrix contraction) on the space of symmetric trace-free $3\times 3$ matrices
${\rm Sym}_{3,\sigma}(\mathbb R)$.  As shown in Section \ref{sec_cellpbm} below the matrix $\mathbb M[A,B_l]$ encodes the far-field decay of the solution $U[A,B_l]$ in the sense that:
\begin{equation} \label{eq_farfield}
U_i[A,B_l](x) =  \mathbb M[A, B_l] : \nabla \mathcal U^i (x) + l.o.t \quad \text{ for i=1,2,3} 
\end{equation}
at infinity (where $\mathcal U^i$ contains vector-fields build up from the Green-function for the Stokes problem).
Due to the linearity of the Stokes equations, we have that, for fixed $B_l$
the mapping $A \mapsto \mathbb M[A,B_l]$ is linear and thus given by a mapping $\mathbb M[B_l]: {\rm Sym}_{3,\sigma}(\mathbb R) \to {\rm Sym}_{3,\sigma}(\mathbb R)$ (such a mapping can be identified with a $5\times 5$ matrix). We set then:
\[
\mathbb M_N(x) = \dfrac{3}{4\pi a^3}\sum_{l=1}^N\mathbb M[{B}_l] \mathbf{1}_{B(x_l,a)} (x)= \dfrac{3}{4\pi}\sum_{l=1}^N\mathbb M[\mathcal{B}_l] \mathbf{1}_{B(x_l,a)}(x) 
\quad \forall \, x \in \mathbb R^3.
\]
We shall obtain below -- under assumption \eqref{H1}-\eqref{H2}-- that $|\mathbb M[\mathcal B_l] |\leq C$ independent of the shape $\mathcal B_l.$ 
The mapping-function $\mathbb M_N$ has then support in $K$ with $\|\mathbb M_N\|_{L^{1}(\mathbb R^3)} \lesssim a^3/d^3$ so that it is bounded independent of $N.$ Then, one can think $\mathbb M_{eff}$ as a possible weak limit if the parameter $N$ was tending to $\infty$. 

\medskip

For instance, in the case $B_l$ are spheres of radius $a$ (so that 
$\mathcal B_l$ is a sphere of radius $1$) comparing the 
expansion \eqref{eq_farfield} with the explicit solutions to the Stokes problem
(see \cite[p. 39] {Guazzelli}) we obtain that $\mathbb M[A,\mathcal B_l] = 20\pi A/3$
so that
\[
\mathbb M_N \sim 5  \sum_{l=1}^N \mathbf{1}_{B(x_l,a)}. 
\]
In this case, the convergence of $\mathbb M_N$ reduces to the convergence of  the distribution of centers $(x_l)_{l=1,\dots,N}.$ If the empirical measures
associated to the distribution of centers converges to some $f \in L^1(\mathbb R^3),$ we obtain, with $\phi= 4\pi Na^3/(3|K|)$ the volume fraction of particles :
 \begin{equation} \label{eq_casesphere}
\mathbb M_N \rightharpoonup  5 \phi f \text{ in $L^1(\mathbb R^3)-w.$}
\end{equation}

\medskip

We give herein a quantitative result with explicit stability bounds for the distance between solutions  to the perforated problem \eqref{eq_Stokes}-\eqref{bc_Stokes}-\eqref{eq_Newton} and to the continuous problem
\eqref{eq_continu}-\eqref{bc_continu}.   We restrict below to  functions $\mathbb M_{eff}$ in classes
\[
\mathcal M(\varepsilon) := \left\{ \mathbb M \in L^{\infty}(\mathbb R^3;{\rm Mat}_{5}(\mathbb R)), \text{ s.t. }  Supp(\mathbb M) \subset K \text{ and } \|\mathbb M\|_{L^{\infty}(K)} \leq \varepsilon \right\}.
\]
Here $\varepsilon >0$ is a given parameter related to the volume fraction $a^3/d^3.$ We identify the space of linear mappings ${\rm Sym}_{3,\sigma}(\mathbb R) \to {\rm Sym}_{3,\sigma}(\mathbb R)$ with ${\rm Mat}_{5}(\mathbb R).$ With the notations introduced before, a precise statement of our main result is the following theorem

\begin{theorem} \label{thm_main}
Let  \eqref{H1}-\eqref{H2} be in force and denote by $(u_N,p_N)$ the unique solution to 
\eqref{eq_Stokes}-\eqref{bc_Stokes}-\eqref{eq_Newton}. Let $\varepsilon_0 >0,$ $\mathbb M_{eff} \in \mathcal M(\varepsilon_0)$ and denote by $(u_c,p_c)$ the unique solution to \eqref{eq_continu}-\eqref{bc_continu}. 

Under the assumption that $\varepsilon_0$ is sufficiently small and that   $a^3/d^3 < \varepsilon_0,$  for arbitary $p\in [1,3/2)$ there exists a constant $C_0$ depending only on $p,\varepsilon_0,K$ for which:
\begin{multline} \label{eq_mainresult}
\|u_N - u_c\|_{L^p_{loc}(\mathbb R^3)}\\
 \leq C_p(K,\varepsilon_0) |A|\left[ \|\mathbb M_N -\mathbb M_{eff}\|_{\dot{H}^{-1}(\RR^3)} + \left( \dfrac{a^3}{d^3}\right)^{1+\theta}   + \|\mathbb{M}_{eff}\|^2_{L^{\infty}(\mathbb R^3)} \right]
 \end{multline}
 where $\theta = \frac 1p - \frac 23.$
\end{theorem}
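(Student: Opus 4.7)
The plan is to bridge $u_N$ and $u_c$ through an intermediate approximation produced by the method of reflections. Appealing to the convergence of the method in the regime $a^{3}/d^{3} < \varepsilon_{0}$ (the main technical content of the preceding sections), one may write
\[
u_{N}(x) = Ax + \sum_{l=1}^{N} U[A_{l},B_{l}](x-x_{l}) + r_{N}(x),
\]
in which the effective local strain $A_{l}$ seen by the $l$-th particle is $A$ corrected by the symmetric part of the gradient, at $x_{l}$, of the Stokeslet field generated by all other particles, and $r_{N}$ is a higher-order reflection remainder controlled by some power of $a^{3}/d^{3}$. Linearity of the cell problem \eqref{eq_Stokeslocal} lets one recast the map $A\mapsto (A_{l})_{l=1}^{N}$ as a linear operator on the vector of strains, solved by a Neumann-type fixed point under the smallness assumption.

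Next, I would substitute the far-field expansion \eqref{eq_farfield} for each $U[A_{l},B_{l}](x-x_{l})$, replacing it by its principal part $\mathbb{M}[A_{l},B_{l}]:\nabla \mathcal{U}(x-x_{l})$. The resulting sum is a convolution of the derivative of the Oseen tensor against the atomic measure $\sum_{l}\mathbb{M}[\mathcal{B}_{l}]D(u_{N})(x_{l})\,\delta_{x_{l}}$; comparing this measure to the piecewise-constant density $\mathbb{M}_{N}(y)D(u_{N})(y)$ supported on the balls $B(x_{l},a)$ should show that $(u_{N},p_{N})$ approximately solves
\[
-{\rm div}\bigl(2(1+\mathbb{M}_{N})(D(u_{N}))-p_{N}\mathbb{I}_{3}\bigr) = {\rm div}\,\mathcal{R}_{N},
\]
with a residual $\mathcal{R}_{N}$ whose $W^{-1,p}(\mathbb{R}^{3})$ norm is of order $(a^{3}/d^{3})^{1+\theta}|A|$ for $p\in[1,3/2)$ and $\theta=\tfrac{1}{p}-\tfrac{2}{3}$. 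The exponent $1+\theta$ reflects the balance between the $|x|^{-2}$ decay of $\nabla\mathcal{U}$ and the discretization error made by replacing $N$ point sources by the continuous density $\mathbb{M}_{N}$, whose total mass is of order $a^{3}/d^{3}$.

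Subtracting this approximate equation from the continuous system \eqref{eq_continu}--\eqref{bc_continu} satisfied by $u_{c}$, the difference $w=u_{N}-u_{c}$ solves a perturbed Stokes equation whose right-hand side has two divergence contributions: the residual $\mathcal{R}_{N}$ and the forcing $2(\mathbb{M}_{N}-\mathbb{M}_{\mathrm{eff}})(D(u_{c}))$. The latter is estimated in $W^{-1,p}(\mathbb{R}^{3})$ by $\|\mathbb{M}_{N}-\mathbb{M}_{\mathrm{eff}}\|_{\dot{H}^{-1}(\mathbb{R}^{3})}\|D(u_{c})\|_{W^{1,p'}}$, provided $p<3/2$ so that $p'>3$ lets Sobolev embedding handle the coefficient-times-function pairing by duality. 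Because $\mathbb{M}_{\mathrm{eff}}\in\mathcal{M}(\varepsilon_{0})$ with $\varepsilon_{0}$ small, the operator $w\mapsto-{\rm div}(2(1+\mathbb{M}_{\mathrm{eff}})(D(w))-q\mathbb{I}_{3})$ is an isomorphism on the relevant $L^{p}_{loc}$ scale as a perturbation of the plain Stokes operator; iterating the Neumann series against plain Stokes is what generates the $\|\mathbb{M}_{\mathrm{eff}}\|_{L^{\infty}}^{2}$ term in \eqref{eq_mainresult}.

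The principal obstacle is the sharp residual estimate in the second step: one must pass from a sum of $N$ localized Stokeslet corrections to a single convolution against $\mathbb{M}_{N}$, uniformly in $N$ and using only the separation hypothesis \eqref{H2}, and handle simultaneously the self-interactions within each ball $B(x_{l},a)$ and the cross-interactions with neighbours. This is precisely where the extension of the method of reflections \cite{Velazquez,Niet-Schub} to particles of arbitrary shape is decisive, and where the dimensional balance forcing $\theta=\tfrac{1}{p}-\tfrac{2}{3}$ is inescapable. By contrast, once that residual is under control, the final comparison of elliptic systems is a fairly standard perturbative argument.
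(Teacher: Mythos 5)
Your overall skeleton (method of reflections for $u_N$, far-field expansion of each particle's response, then a comparison with the continuous problem) matches the paper's, but two steps in your execution contain genuine gaps. First, the intermediate claim that $(u_N,p_N)$ approximately solves $-\mathrm{div}\bigl(2(1+\mathbb M_N)(D(u_N))-p_N\mathbb I_3\bigr)=\mathrm{div}\,\mathcal R_N$ is ill-posed as written: $\mathbb M_N$ is supported on $\bigcup_l B(x_l,a)$, i.e.\ essentially on the particles, where the (extended) field $u_N$ is rigid and hence $D(u_N)=0$. The effective stress must be built from the local strain that each particle \emph{sees} (to leading order the constant $A$, corrected by the other particles' dipole fields), not from $D(u_N)$ evaluated on the support of $\mathbb M_N$. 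The paper never passes through such an approximate PDE: it compares the explicit leading term $\sum_l \mathcal K[A,B_l](\cdot-x_l)=\sum_l\mathbb M[A,B_l]:\nabla\mathcal U(\cdot-x_l)$ with the first Picard iterate $\tilde v_c(x)=\int \mathbb M_{eff}(A)(y):\nabla\mathcal U(x-y)\,dy$, converting the atomic sum into a convolution against the piecewise-constant density $\mathbb M_N(A)$ by the mean-value identity of Lemma \ref{mean-value} (the harmonicity of the pressure kernel is what makes the correction terms computable and small).

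Second, your duality estimate for the cross term, $\|(\mathbb M_N-\mathbb M_{eff})(D(u_c))\|_{W^{-1,p}}\lesssim \|\mathbb M_N-\mathbb M_{eff}\|_{\dot H^{-1}}\|D(u_c)\|_{W^{1,p'}}$, requires $u_c\in W^{2,p'}_{loc}$ with $p'>3$. Since $\mathbb M_{eff}$ is only assumed bounded and measurable (class $\mathcal M(\varepsilon_0)$), $u_c$ solves a divergence-form Stokes system with $L^\infty$ coefficients and no such second-order regularity is available; the pairing against $\dot H^{-1}$ cannot be closed this way. The paper sidesteps this by testing $\mathbb M_N-\mathbb M_{eff}$ only against the \emph{constant} strain $A$: writing any component $h$ of $\mathbb M_N-\mathbb M_{eff}$ as $\partial_1\varphi_1+\partial_2\varphi_2$ with $\varphi_i\in L^2$ and applying the Calder\'on--Zygmund bound to $\partial^2\mathbf U$ gives \eqref{eq_R11} directly. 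The price is that one compares with $\tilde u_c$ rather than $u_c$, and the discrepancy $u_c-\tilde u_c$ is estimated separately by the contraction argument of Proposition \ref{uc-uc}; this is the actual, quantitative source of the $\|\mathbb M_{eff}\|^2_{L^\infty}$ term, which in your write-up is attributed only loosely to ``iterating the Neumann series.'' Unless you replace $D(u_c)$ by $A$ plus a separately estimated remainder (which is in effect what the paper's decomposition $R_{perf}+R_{main}+R_{cont}$ accomplishes), the argument does not close.
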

 
Several comments are in order. First, In \eqref{eq_mainresult},
the $\dot{H}^{-1}(\mathbb R^3)$ norm on the right-hand side must be understood componentwise. Second, in the particular case of spheres, we can 
compute $\mathbb M_{eff}$ via \eqref{eq_casesphere} so that, we obtain a fully rigorous justification of the system:
\begin{align} \label{eq_continu_sphere}
  & \left\{  
 \begin{aligned}
 - {\rm div} \left[2\mu\left(1+ 5\phi f \right) D(u)  - p \mathbb{I}_3\right] &=& 0 \\
 {\rm div} u &=& 0 
 \end{aligned}
 \right. && \text{ on $\mathbb R^3$}, \\ \label{bc_continu_sphere}
  & \quad u(x) = A x && \text{ at infinity},
 \end{align}
that has been obtained previously in \cite{Niet-Schub,Almog-Brenner}.
Finally, the restriction on exponent $p$ is reminiscent of the singularity 
of solutions to \eqref{eq_Stokeslocal}, corresponding to the gradient of the 
Green function for the Stokes problem on $\mathbb R^3,$ {\em i.e.} like $1/|x|^{2}$. This singularity allows an $L^p$-space for $p<3/2$ in 
dimension 3. In particular, this restriction can be removed when measuring 
the distance between $u_N$ and $u_c$ outside the particle domain $K$
(see \cite{Niet-Schub} in the case of spheres).
 
\medskip 
 
As in the original proof of Einstein, Theorem \ref{thm_main} relies on two main properties. First, 
each particle in the cloud behaves as if it was alone in the strain flow $x \mapsto Ax.$ Second, there is an
underlying additivity principle which implies that the action of the cloud of particles on the fluid 
is the sum of the undividual actions of the different particles.  In the two next sections, we justify the first of these two properties by extending Einstein computations to general suspensions. Broadly, a first guess $(u,p)$ for a solution to \eqref{eq_Stokes}-\eqref{bc_Stokes}-\eqref{eq_Newton} could be
\[
u_{app}^{(0)}(x) = Ax \qquad p_{app}^{(0)}(x) = 0. 
\]
This yields a solution to \eqref{eq_Stokes} and \eqref{eq_Newton} which does not fulfill  the boundary
conditions \eqref{bc_Stokes} on $\partial B_l.$ So, we apply the linearity of the Stokes problem and introduce a first corrector:
\[
\left\{
\begin{aligned}
u_1 (x) &= &  \sum_{l=1}^{N} U[A,B_l](x-x_l)  \\
p_1(x) & =&    \sum_{l=1}^{N} \mu P[A,B_l](x-x_l) 
\end{aligned}
\right.
 \qquad \text{ where }
\left\{
\begin{aligned}
  U[A,B_l](x) &=&  a U[A,\mathcal B_l]\left(\dfrac{x}{a}\right) \\
  P[A,B_l](x) &=& P[A,\mathcal B_l] \left(\dfrac{x}{a}\right)
 \end{aligned}
  \right.
\]
Again the candidate $u_{app} ^{(1)}= u^{(0)}_{app}+u_1$ is a solution to \eqref{eq_Stokes} and \eqref{eq_Newton}
but does not match boundary conditions \eqref{bc_Stokes}. So, we proceed with compensating again the
non rigid part of the velocity-field $u_{app}^{(1)}$ on the boundaries $\partial B_l.$ This starts a process known as the "method of reflections". It has been studied in other contexts in \cite{Velazquez,Salomon,OJ} and extended to the problem of effective viscosity for a suspension of spheres in \cite{Niet-Schub}. Herein, we modify a bit the method by  correcting only the first order term in the expansion of the boundary values of 
$u_{app}^{(k)}$ on $\partial B_l:$
\[
u_{app}^{(k)}(x) = V^{(k)}_l + \tilde{A}^{(k)}_{l} \cdot (x - x_l)+ O(|x-x_l|^2),
\qquad  V^{(k)}_l = u_{app}^{(k)}(x_l), \; \tilde{A}^{(k)}_l = \nabla u_{app}^{(k)}(x_l).
\]
This enables to rely on the semi-explicit solutions $(U[A,\mathcal B_l],P[A,\mathcal B_l])$ to \eqref{eq_Stokeslocal} 
and relate the final computations with the associated $\mathbb M[A,B_l].$ However, this does not 
rule out the key-difficulty of the process. Indeed, the method of reflections leads to the iterative
formula:
\[
A^{(k+1)}_l=  \sum_{\lambda \neq l}  D(U)[A^{(k)}_{\lambda},B_{\lambda}](x_{l}- x_{\lambda}).
\]
with a kernel $D(U)[A,B_{\lambda}]$ wich decays generically like $x \mapsto a^3A/|x|^3.$ A priori,
the above iterative formula entails then the bound:
\[
\max_{l} |A^{(k+1)}_{l}| \lesssim \left( \dfrac{a^3}{d^3}\right) |\ln(N)| \max_{l} |A^{(k)}_{l}|
\] 
which yields that $a^3/d^3$ must be small w.r.t. $\ln(N)$ for the method to converge (see  assumption (2.3) in  \cite{Niet-Schub}).  We remove this difficulty herein
by showing that there exists a Calder\`on-Zygmund operator underlying the above recursive formula. This enables to rule out the limitation on $a^3/d^3$ with respect to the  number $N$ of particles. 
These computations are explained in the two next sections. 
Section \ref{sec_cellpbm} is devoted to the analysis of the problem \eqref{eq_Stokeslocal}-\eqref{eq_Newtonlocal}. The Section \ref{sec_refmet} builds up on this analysis to study the convergence of the method of reflections and compute 
error estimates between the sequence of approximated solutions $u_{app}^{(n)}$ and the exact solution $u_N$ to \eqref{eq_Stokes}-\eqref{bc_Stokes}-\eqref{eq_Newton}.

\medskip

The two last sections are devoted to the proof of the additivity principle and to complete the proof of Theorem \ref{thm_main}. Once the method of reflections is proved to converge, we have an expansion of the solution 
to \eqref{eq_Stokes}-\eqref{bc_Stokes}-\eqref{eq_Newton} in terms of the parameter $a^3/d^3.$ We prove that there exits an equivalent expansion of the
solution to \eqref{eq_continu}-\eqref{bc_continu} w.r.t. $\mathbb M_{eff}$ so that there is a correspondance between the first terms in the expansions of both solutions.
We emphasize that, as classical with the weak-formulation of the Stokes problem, one obtains estimates on the difference of  velocity-fields 
$u_N-u_c.$  Regularity properties of the Stokes problem entail then 
similar properties for the pressures.

\medskip

Through the paper, we use the following conventions. In the space of $3\times 3$ matrices ${\rm Mat}_{3}(\mathbb R),$
we denote by ${\rm Sym}_3(\mathbb R)$ the set of symmetric matrices
and ${\rm Sym}_{3,\sigma}(\mathbb R)$ its subspace containing only the trace-free ones. We denote $\mathbb P_{3,\sigma}$ the orthogonal projection from 
 ${\rm Mat}_3(\mathbb R)$  onto ${\rm Sym}_{3,\sigma}(\mathbb R)$ with respect to the matrix contraction. 
 
Concerning function spaces, we use classical notations for Lebesgue and Sobolev spaces. We also introduce the Beppo-Levi space $\dot{H}^1$
and  its divergence-free variant:
\beq
\dot{H}^1_\sigma(\mathcal O):=\{u\in\dot{H}^1(\mathcal O) \text{ such that } {\rm div } u=0 \text{ on $\mathcal O$\}}.
\enq

In the whole paper, we denote $\mathbf{U}:=(U^{ij})_{1\leq i,j\leq 3}$ and $\mathbf{q}:=(q_1,q_2,q_3)$ the fundamental solution to the Stokes equation in the whole space $\RR^3$, which can be written 
\beq
U^{ij}:=-\frac{1}{8\pi}\left[\frac{\delta_{ij}}{|x-y|}+\frac{(x_i-y_i)(x_j-y_j)}{|x-y|^3}
\right],
\qquad
q_j=\frac{1}{4\pi}\frac{x_j-y_j}{|x-y|^3}.
\enq
for $i,j=1,2,3.$ We collect $(U^{i,1},U^{i,2},U^{i,3})$ in the vector $\mathcal U^{i}.$

We also introduce the Bogovskii operator $\mathfrak{B}_{\mathcal{B}}[f]$ 
defined for arbitrary mean-free $f\in L^2(\mathcal{B})$. It is well-known that this $\mathfrak{B}_{\mathcal{B}}$ is continuous with values in $H_0^1(\mathcal{B})$ and characterized by ${\rm div} \mathcal B_{\mathcal{B}}[f]=f$ in $\mathcal{B}$. In particular, we denote ${\rm div} \mathfrak{B}_{\lambda_1,\lambda_2}[f]:=\mathfrak{B}_{B(0,\lambda_2)\setminus \overline{B(0,\lambda_1)}}[f]$  for any $0<\lambda_1<\lambda_2$.

\medskip


\section{Analysis of the Stokes problem} \label{sec_cellpbm}
In the whole section, we suppose that $\mathcal{B}\subset B(0,1)\subset\RR^3$ has smooth boundaries $\p\mathcal{B}.$ Given a trace-free  $A \in {\rm Sym}_{3,\sigma}(\mathbb R),$ let consider the following problem:
\begin{align} \label{eq_Stokessec}
& \left\{
\begin{aligned}
- {\rm div \Sigma(u,p)}&=& 0  && \text{ in $\mathbb R^3 \setminus \overline{\mathcal B}$},\\
{\rm div} u &=& 0 &&\text{ in $\mathbb R^3 \setminus \overline{\mathcal B}$},
\end{aligned}
\right.
&& 
\left\{
\begin{aligned}
&u(x) = -Ax + V + \omega \times x && \text{ on $\partial \mathcal B$}, \\
&u(x) = 0 && \text{ at infinity },
\end{aligned}
\right. 
\\ \label{eq_Newtsec}
& \int_{\partial \mathcal B} \Sigma(u,p)n{\rm d}\sigma = 0 &&
\int_{\partial \mathcal B} x \times \Sigma(u,p)n{\rm d}\sigma = 0. 
\end{align}
It is classical that, given alternatively a $3\times3$ matrix $A$  and  $V,\omega \in \mathbb R^3 \times \mathbb R^3,$ there exists a unique solution $(u[A],p[A])$ (with $V=\omega=0$)
and $(u[V,\omega],p[V,\omega])$ (with $A=0$) to \eqref{eq_Stokessec}  in $\dot{H}^1(\mathbb R^3) \times L^2(\mathbb R^3)$. The mapping 
\[
(V,\omega) \mapsto ( \int_{\partial \mathcal B} \Sigma(u[V,\omega],p[V,\omega])n{\rm d}\sigma ,
\int_{\partial \mathcal B} x \times \Sigma(u[V,\omega],p[V,\omega])n{\rm d}\sigma) 
\]
is then linear and symmetric positive definite. In particular, 
there exists a unique solution $(V_A,\omega_A)$ to the problem:
\begin{align*}
 \int_{\partial \mathcal B} \Sigma(u[V_A,\omega_A],p[V_A,\omega_A])n{\rm d}\sigma & =-\int_{\partial \mathcal B} \Sigma(u[A],p[A])n{\rm d}\sigma \\
\int_{\partial \mathcal B} x \times \Sigma(u[V_A,\omega_A],p[V_A,\omega_A])n{\rm d}\sigma
&=- \int_{\partial \mathcal B} x \times \Sigma(u[A],p[A])n{\rm d}\sigma.
\end{align*}
The candidate  $U[A,\mathcal B]= u[A] + u[V_A,\omega_A], \; P[A,\mathcal B] = p[A] +p[V_A,\omega_A]$ is then a solution to \eqref{eq_Stokessec}-\eqref{eq_Newtsec}. By difference and integration by parts, we obtain uniqueness of a velocity-field solution  which enables to recover that the pressure is unique up to a constant also. 
Since $U[A,\mathcal B]$ matches a velocity-field of the form $-Ax + V + \omega\times x$ on $\partial \mathcal B,$ it is classical to extend $U[A,\mathcal B]$
by the field corresponding to this boundary value on $\mathcal B$ yielding 
a vector-field $U[A,\mathcal B] \in \dot{H}^1_{\sigma}(\mathbb R^3).$   
Straightforward integration by parts arguments show that
this extended $U[A,\mathcal B]$ realizes:
\[
\min \Bigl \{ \int_{\mathbb R^3} |D(u)|^2, \quad u \in \dot{H}^1_{\sigma}(\mathbb R^3), \quad  D(u) = -A \text{ on $\mathcal B$}\Bigr\}.
\] 
In particular, we note that the set on which the minimum is computed on the right-hand side increases when $\mathcal B$ decreases. Since we assume 
$\mathcal B \subset B(0,1)$ in this section, we infer a uniform bound for $\|D(U[A,\mathcal B])\|_{L^{2}(\mathbb R^3)}$ by 
the minimum reached for $\mathcal B = B(0,1).$ This yields that
\begin{equation} \label{eq_unifbound}
\|D(U[A,\mathcal B])\|_{L^2(\mathbb R^3)}
\leq C|A|
\end{equation}
(and thus $\|U[A,\mathcal B]\|_{\dot{H}^1(\mathbb R^3)} \leq C|A|$ also) with  a constant 
$C$ uniform in $\mathcal B \subset B(0,1).$ 

One may proceed similarly to show that, under the assumption \eqref{H1}-\eqref{H2}, the problem \eqref{eq_Stokes}-\eqref{bc_Stokes}-\eqref{eq_Newton} admits a unique solution $(u_N,p_N)$ such that 
\begin{equation} \label{eq_integrabilite}
v_N : x \mapsto u_N(x)- Ax \in \dot{H}^1 (\mathcal F_N), \qquad x \mapsto p_N(x) \in L^2(\mathcal F_N)
\end{equation}
Furthermore, the velocity-field of this solution can be extended to the whole $\mathbb R^3$ to yield a vector-field $v_N(x) = u_N(x) - Ax$ that realizes:
\[
\min \Bigl \{ \int_{\mathbb R^3} |D(u)|^2, \quad u \in \dot{H}^1_{\sigma}(\mathbb R^3), \quad  D(u) = -A \text{ on $\mathcal B_l  \quad \forall \, l=1,\ldots,N$}\Bigr\}.
\]
In particular, under assumptions \eqref{H1}-\eqref{H2} we can construct an extension $\tilde{v}_N$ on $\bigcup_{l} B(x_l,2a)$ of the field that matches $x \mapsto -A(x-x_l)$ on each of the $B(x_l,a)$ (and thus on $B_l \subset B(x_l,a)$) by truncating and lifting  the divergence terms. 
Straightforward computations show that we have then:
\[
\int_{\mathbb R^3} |\nabla v_N|^2 = 2 \int_{\mathbb R^3} |D(v_N)|^2 
\leq  2 \int_{\mathbb R^3} |D(\tilde{v}_N)|^2 \leq C \dfrac{a^3}{d^3} |A|.
\]
so that there exists a uniform constant $C$ for which:
\begin{equation} \label{eq_unifuN}
\|\nabla v_N\|_{L^2(\mathbb R^3)} \leq C \left( \dfrac{a^3}{d^3}\right)^{\frac 12}.
\end{equation}

\medskip

Before going to the main result of this section, we prepare the proof with a 
control on momenta of the trace of 
\[
\Sigma[A,\mathcal{B}] = 2 D(U)[A,\mathcal{B}] - P[A,\mathcal{B}] \mathbb{I}_3.
\]
on $\partial \mathcal B.$ This is the content of the following preliminary lemma: 
\begin{lemma}\label{resistence}

There exists an absolute constant $C$ such that:
\beq
\Bigl| \mathbb P_{3,\sigma}\left[ \int_{\p\mathcal{B}}\Sigma[A,\mathcal{B}]n\otimes yd\sigma(y)\right] \Bigr|\leq C|A|,
\enq

\end{lemma}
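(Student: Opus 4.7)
The plan is to recover the bracketed surface integral as a boundary contribution in the weak formulation of the Stokes system for $(U[A,\mathcal B], P[A,\mathcal B])$, tested against a compactly supported divergence-free extension to $\mathbb R^3$ of the linear profile $y \mapsto By$ with $B \in {\rm Sym}_{3,\sigma}(\mathbb R)$. The bulk term will be controlled via the uniform bound \eqref{eq_unifbound}, and taking the supremum over $B$ of unit Frobenius norm will yield the claim, since for symmetric trace-free $B$ the pairing $B : M$ coincides with $B : \mathbb P_{3,\sigma}(M)$.

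I would first construct the test function. Fix a smooth cut-off $\chi$ equal to $1$ on $B(0,2)$ and supported in $B(0,3)$. Since $B$ is trace-free, the field $y \mapsto By$ is divergence-free, so $g(y) := \nabla \chi(y) \cdot By$ is supported in the annulus $B(0,3) \setminus \overline{B(0,2)}$; a short symmetry computation (using $\int_{|y|=2} y\otimes y\,d\sigma \propto \mathbb I_3$ and $\text{tr}\,B = 0$) shows that $g$ has zero mean on this annulus. Setting
\[
\varphi(y) := \chi(y) By - \mathfrak{B}_{2,3}[g](y),
\]
one obtains $\varphi \in H_0^1(B(0,3);\mathbb R^3)$ with $\text{div}\,\varphi = 0$, $\varphi(y) = By$ on $B(0,2) \supset \overline{\mathcal B}$, and $\|\nabla \varphi\|_{L^2(\mathbb R^3)} \leq C|B|$ with $C$ absolute, because the cut-off $\chi$ and the annulus on which $\mathfrak B_{2,3}$ acts are fixed, independent of $\mathcal B$.

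Next, I would test the Stokes equation $-\text{div}\,\Sigma[A,\mathcal B] = 0$ against $\varphi$ and integrate by parts over $\mathcal F = \mathbb R^3 \setminus \overline{\mathcal B}$. The compact support of $\varphi$ removes the contribution at infinity as well as on $\partial B(0,3)$, leaving, with the paper's convention that $n$ is outward from the fluid on $\partial \mathcal B$,
\[
\int_{\mathcal F} \Sigma[A,\mathcal B] : \nabla \varphi \,dy \;=\; \int_{\partial \mathcal B} \Sigma[A,\mathcal B] n \cdot \varphi \,d\sigma(y).
\]
On the right-hand side, since $\varphi = By$ on a neighborhood of $\overline{\mathcal B}$,
\[
\int_{\partial \mathcal B} \Sigma n \cdot (By) \,d\sigma \;=\; B : \int_{\partial \mathcal B} \Sigma n \otimes y \,d\sigma \;=\; B : \mathbb P_{3,\sigma}(M),
\]
where $M$ denotes the matrix inside the brackets in the lemma. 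On the left-hand side, $\text{div}\,\varphi = 0$ kills the pressure term and the symmetry of $D(U[A,\mathcal B])$ gives $\Sigma[A,\mathcal B] : \nabla \varphi = 2\,D(U[A,\mathcal B]) : D(\varphi)$; Cauchy--Schwarz together with \eqref{eq_unifbound} and the test-function bound then yield
\[
|B : \mathbb P_{3,\sigma}(M)| \;\leq\; 2\,\|D(U[A,\mathcal B])\|_{L^2(\mathbb R^3)}\,\|\nabla \varphi\|_{L^2(\mathbb R^3)} \;\leq\; C|A|\,|B|.
\]
Taking the supremum over $B$ of unit Frobenius norm completes the proof.

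There is no genuine obstacle here; the only technical point is keeping the constant in the test-function estimate absolute, which is free of charge because the construction uses a cut-off and a Bogovski\u{\i} corrector on fixed sets. The shape $\mathcal B$ enters only through the inclusion $\mathcal B \subset B(0,1)$, which is precisely what makes the buffer $B(0,2) \setminus \mathcal B$ available for the divergence correction.
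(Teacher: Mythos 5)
Your proof is correct. It rests on the same two pillars as the paper's argument --- a single integration by parts against a field equal to $By$ on $\partial\mathcal B$, followed by Cauchy--Schwarz and the uniform energy bound \eqref{eq_unifbound} --- but the test function is genuinely different. The paper represents the linear map $A\mapsto \mathbb P_{3,\sigma}\bigl[\int_{\partial\mathcal B}\Sigma[A,\mathcal B]n\otimes y\,d\sigma\bigr]$ in an orthonormal basis $(\mathcal E_j)$ of ${\rm Sym}_{3,\sigma}(\mathbb R)$ and, using the boundary condition of $V_j=U[\mathcal E_j,\mathcal B]$ to trade $\mathcal E_j y$ for $-V_j$ plus a rigid field, identifies each entry with the energy pairing $2\int_{\RR^3\setminus\overline{\mathcal B}}D(V_i):D(V_j)$; this requires the force- and torque-free conditions \eqref{eq_Newtsec} to kill the rigid part and the far-field decay of both solutions to justify the integration by parts on the exterior domain. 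You instead build an explicit compactly supported divergence-free lift of $By$ by cutoff plus a Bogovski\u{\i} corrector on the fixed annulus $B(0,3)\setminus\overline{B(0,2)}$, which sidesteps both the rigid-body conditions and any discussion of decay at infinity, at the modest cost of checking the mean-zero compatibility for the corrector (which you do correctly, via ${\rm tr}\,B=0$). Your route is slightly more robust and self-contained; the paper's route is shorter and exhibits the symmetric ``resistance'' structure $2\int D(V_i):D(V_j)$ of the mapping, which is of independent interest. Both yield a constant depending only on $\mathcal B\subset B(0,1)$ through \eqref{eq_unifbound}.
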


We recall that $\mathbb P_{3,\sigma}$ stands for the orthogonal projection from ${\rm Mat}_{3}(\mathbb R)$ onto ${\rm Sym}_{3,\sigma}(\mathbb R).$
With this lemma, we obtain that the linear mappings 
\[
A  \mapsto  \mathbb P_{3,\sigma}\left[ \int_{\p\mathcal{B}}\Sigma[A,\mathcal{B}]n\otimes yd\sigma(y)\right]
\]
are uniformly bounded whatever $\mathcal B \subset B(0,1).$

\begin{proof}
Because of the linearity of the Stokes equations and of the stress tensor, the mapping from ${\rm Sym}_{3,\sigma}(\mathbb R)$ to ${\rm Sym}_{3,\sigma}(\mathbb R)$:
\beq
\mathcal{L}:~~A\mapsto \mathbb P_{3,\sigma}\left[ \int_{\p\mathcal{B}}\Sigma[A,\mathcal{B}]n\otimes yd\sigma(y)\right]
\enq
is also linear. So, let $(\mathcal{E}_i)_{i=1,\dots,5}$ an orthonormal basis of ${\rm Sym}_{3,\sigma}(\mathbb R)$, and $(V_i:=U[\mathcal{E}_i,\mathcal{B}])_{i=1,\ldots,5}$ the corresponding velocity-fields solution to the Stokes problem \eqref{eq_Stokessec}-\eqref{eq_Newtsec}. Then, the mapping $\mathcal{L}$ is represented  in this basis by the matrix $\mathbb{L}$:
\beq
\mathbb{L}:=\Big(\int_{\p\mathcal{B}}\Sigma[\mathcal{E}_i,\mathcal{B}]n\otimes yd\sigma(y):\mathcal{E}_j\Big)_{1\leq i,j\leq 5},
\enq 
Our proof reduces to obtaining that $|\mathbb{L}_{i,j}| \leq C$
for arbitrary $i,j$ in $\{1,\ldots,5\}.$  So, let fix $i,j,$ by integrating by parts, we have that 
\[
 \mathbb{L}_{i,j} = \int_{\partial \mathcal B} \Sigma[\mathcal E_i,\mathcal B]n \cdot (\mathcal E_j y){\rm d}\sigma(y)
 = 
 2\int_{\RR^3\setminus\overline{\mathcal{B}}}D(V_i):D(V_j)  .
\]
so that:
\[
|\mathbb L_{i,j}| \leq 2 \|D(V_i)\|_{L^2(\RR^3\setminus\overline{\mathcal{B}})}\|D(V_j)\|_{L^2(\RR^3\setminus\mathcal{B})} \leq C|\mathcal E_i||\mathcal E_j|,
\]
where we applied \eqref{eq_unifbound} to obtain the last inequality.
This concludes the proof. 
\end{proof}

\medskip

We continue the analysis of \eqref{eq_Stokessec}-\eqref{eq_Newtsec} 
by providing pointwise estimates on $U[A,\mathcal B]$. The content of the following theorem is reminiscent of \cite[Section V.3]{Galdi}:
\begin{proposition}\label{asym-stokes} 
Let $(U[A,\mathcal{B}],P[A,\mathcal B])$ be the unique solution to \eqref{eq_Stokessec}-\eqref{eq_Newtsec}. There exists a vector field $\mathcal{H}[A,\mathcal{B}]$ depending on $A$ and $\mathcal{B}$, such that for any $|x|>4$, 
\beq
U[A,\mathcal{B}](x)=\mathcal{K}[A,\mathcal{B}](x)+\mathcal{H}[A,\mathcal{B}](x),
\enq
where $\mathcal{K}[A,\mathcal{B}]_i(x)=\mathbb{M}[A,\mathcal{B}]:\nabla\mathcal{U}^{i}(x)$ for $i=1,2,3$ with:
\begin{equation} \label{eq_defM}
\mathbb M[A,\mathcal B] = \mathbb{P}_{3,\sigma}\left\{  \int_{\p\mathcal{B}}\left[ -(\Sigma[A,\mathcal{B}]n)\otimes y + 2U[A,\mathcal B] \otimes n \right]d\sigma(y)\right\}.
\end{equation}
 Moreover, there exists a constant 
$C$ independent of $A$ for which:
\beq
|\mathbb M[A,\mathcal B]| \leq C|A|\,, \qquad
|\nabla^{\beta}\mathcal{H}[A,\mathcal{B}](x)|\leq C|A|\frac{1}{|x|^{3+|\beta|}}
\quad \forall \, \beta \in \mathbb N^3.
\enq
\end{proposition}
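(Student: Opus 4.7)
The plan is to start from a boundary integral representation of $U[A,\mathcal B]$ and Taylor-expand the Stokeslet kernel in the variable $y\in\p\mathcal B$. Applying Green's identity between $(U,P):=(U[A,\mathcal B],P[A,\mathcal B])$ and the Stokeslet $(\mathcal U^i,q^i)$ on the annulus $B(0,R)\setminus\overline{\mathcal B}$ and letting $R\to\infty$ (the contribution on $\p B(0,R)$ vanishes thanks to the decay of $U$ at infinity implied by \eqref{eq_unifbound}), I obtain
\[
U_i[A,\mathcal B](x)=\int_{\p\mathcal B}\mathcal U^i(x-y)\cdot \Sigma[A,\mathcal B]n\,d\sigma(y)-\int_{\p\mathcal B} U[A,\mathcal B](y)\cdot \Sigma(\mathcal U^i,q^i)(x-y)\,n\,d\sigma(y),
\]
valid for any $x$ in the fluid domain.

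For $|x|>4$ and $y\in\p\mathcal B\subset B(0,1)$ one has $|x-y|\geq|x|/2$, so I Taylor-expand the kernels at $x$: $\mathcal U^i(x-y)=\mathcal U^i(x)-y_k\p_k\mathcal U^i(x)+R_1^i(x,y)$ with $|R_1^i|\leq C|y|^2/|x|^3$, and $\Sigma(\mathcal U^i,q^i)(x-y)=\Sigma(\mathcal U^i,q^i)(x)+R_2^i(x,y)$ with $|R_2^i|\leq C|y|/|x|^3$. Plugging in, the constant-in-$y$ part of the single layer is $\mathcal U^i(x)\cdot\int_{\p\mathcal B}\Sigma n\,d\sigma=0$ by the force balance \eqref{eq_Newtsec}. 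The linear-in-$y$ part of the single layer equals $-\bigl[\int(\Sigma n)\otimes y\,d\sigma\bigr]:\nabla\mathcal U^i(x)$. The constant-in-$y$ part of the double layer, using $\Sigma(\mathcal U^i,q^i)=2D(\mathcal U^i)-q^i\mathbb I_3$, the identity $\int_{\p\mathcal B}U\cdot n\,d\sigma=0$ (incompressibility) and the symmetry of $D(\mathcal U^i)$, reduces to $-2D(\mathcal U^i)(x):\int_{\p\mathcal B} U\otimes n\,d\sigma$. Recombining, the $|x|^{-2}$ contribution writes $\tilde{\mathbb M}:\nabla\mathcal U^i(x)$ with $\tilde{\mathbb M}=-\int(\Sigma n)\otimes y\,d\sigma+2\int U\otimes n\,d\sigma$.

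The projection $\mathbb P_{3,\sigma}$ may then be inserted for free in the contraction $\tilde{\mathbb M}:\nabla\mathcal U^i$: the trace part of $\tilde{\mathbb M}$ produces $\text{tr}(\tilde{\mathbb M})\,\text{tr}(\nabla\mathcal U^i)=0$ by $\nabla\cdot\mathcal U^i=0$; the antisymmetric part of $-\int(\Sigma n)\otimes y$ is the total torque, zero by \eqref{eq_Newtsec}; the antisymmetric contribution from $2\int U\otimes n$ is killed in the derivation above by the symmetry of $D(\mathcal U^i)$. This identifies $\mathcal K[A,\mathcal B]_i=\mathbb M:\nabla\mathcal U^i$ with $\mathbb M=\mathbb P_{3,\sigma}\tilde{\mathbb M}$. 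The bound $|\mathbb M|\leq C|A|$ follows at once: Lemma \ref{resistence} controls $|\mathbb P_{3,\sigma}\int(\Sigma n)\otimes y\,d\sigma|$ by $C|A|$ uniformly in $\mathcal B\subset B(0,1)$; and since $U|_{\p\mathcal B}=-Ay+V+\omega\times y$, the divergence theorem gives $\int_{\p\mathcal B}U\otimes n\,d\sigma=|\mathcal B|A$ plus a purely antisymmetric piece coming from $\omega$, so its projection equals $|\mathcal B|A$ with $|\mathcal B|\leq|B(0,1)|$.

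Finally the remainder $\mathcal H=U-\mathcal K$ collects the contributions of $R_1^i$ and $R_2^i$; pairing these tails against $\Sigma n\in H^{-1/2}(\p\mathcal B)$ and against $U\in H^{1/2}(\p\mathcal B)$ (both controlled by $C|A|$ through \eqref{eq_unifbound} and trace inequalities) yields $|\mathcal H(x)|\leq C|A|/|x|^3$. Differentiating the representation formula $|\beta|$ times under the integral pulls out the additional factor $|x|^{-|\beta|}$ from the smooth Stokes kernels, giving the claimed gradient estimate. The main obstacle I anticipate is the rigorous treatment of the integrals $\int(\Sigma n)\otimes y$ and their higher-moment analogues when $\Sigma n$ only lies in $H^{-1/2}(\p\mathcal B)$: Lemma \ref{resistence} is tailored for the leading pairing, and suitable $H^{1/2}$-extensions of the Taylor tails must be constructed (uniformly in $\mathcal B\subset B(0,1)$) to close the remainder estimates by duality.
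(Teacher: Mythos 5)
Your argument is the classical multipole expansion of the layer-potential representation: Green's identity against the Stokeslet, Taylor expansion of the kernels in $y\in\p\mathcal B$, and identification of the zeroth and first moments with the force, torque and the matrix $\tilde{\mathbb M}=-\int(\Sigma n)\otimes y\,d\sigma+2\int U\otimes n\,d\sigma$. The algebra is right and recovers exactly \eqref{eq_defM}: the trace part is killed by ${\rm div}\,\mathcal U^i=0$, the antisymmetric parts by the torque balance and the symmetry of $D(\mathcal U^i)$, and the bound on $\mathbb P_{3,\sigma}\int(\Sigma n)\otimes y\,d\sigma$ is indeed Lemma \ref{resistence}. This is a genuinely different route from the paper, which never writes a boundary representation on $\p\mathcal B$: it truncates $U$ with a cutoff $\chi$ supported away from $\mathcal B$, corrects the divergence with a Bogovskii lift, and represents the resulting whole-space Stokes solution by convolving the Green function with a source $f_\chi$ supported in the \emph{fixed} annulus $B(0,2)\setminus\overline{B(0,1)}$; the moments of $f_\chi$ are then converted back to boundary integrals on $\p\mathcal B$ by integration by parts.

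The gap — which you flag yourself but do not close — is the remainder estimate, and it is not cosmetic. Pairing the second-order Taylor tail against $\Sigma n$ requires a bound on $\|\Sigma[A,\mathcal B]n\|_{H^{-1/2}(\p\mathcal B)}$ (and on the trace of $U$ in $H^{1/2}(\p\mathcal B)$, and on the relevant extension operators) that is \emph{uniform over all smooth $\mathcal B\subset B(0,1)$}. These constants degenerate with the geometry of $\p\mathcal B$ (trace and Bogovskii/Ne\v{c}as constants, and the $L^2$ bound on the pressure up to the boundary, all depend on the shape), whereas uniformity in the shape is essential here: it is the whole point of the polydispersed setting, and it is what Propositions \ref{Al} and \ref{prop_perfore} consume downstream. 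Lemma \ref{resistence} cannot rescue you: it controls only the projected first moment of $\Sigma n$ against the affine fields $\mathcal E_j y$ (via the energy identity $\mathbb L_{ij}=2\int D(V_i):D(V_j)$), not the pairing of $\Sigma n$ against the non-affine Taylor tails $R_1^i(x,\cdot)$. The paper's cutoff construction is designed precisely to sidestep this: since $f_\chi$ lives in the fixed annulus, the tail is paired in the duality $\dot H^{-1}\times H^1_0$ of that annulus, with $\|f_\chi\|_{\dot H^{-1}}\leq C\|\bar U\|_{\dot H^1}\leq C\|U[A,\mathcal B]\|_{\dot H^1(\RR^3)}\leq C|A|$ by \eqref{eq_unifbound} and $\|\phi_x\|_{W^{1,\infty}}\leq C|x|^{-3}$, all constants depending only on the annulus. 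To repair your proof along your own lines you would either have to prove the uniform $H^{-1/2}(\p\mathcal B)$ bound (which is false without further hypotheses on the shapes), or push the surface of integration out to a fixed sphere such as $\p B(0,3)$ using ${\rm div}\,\Sigma=0$ — at which point you have essentially reconstructed the paper's argument.
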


Before giving a proof of this proposition, we note that for 
large $x$, we have:
\[
\nabla \mathcal{U}^i(x) \sim \dfrac{C}{|x|^2}.
\]
Consequently the splitting that we obtain in the above proposition
corresponds to the extraction of the leading order term ($\mathcal K[A,\mathcal B](x)$) at infinity. A second crucial remark induced by this proposition is that the amplitude of both terms (the leading term $\mathcal K$ and remainder $\mathcal H$) do not depend asymptotically on the shape $\mathcal B.$

\begin{proof}
 Let $\chi(x)\in C_0^{\infty}(\mathbb R^3)$ such that $\chi\equiv1$ in $\RR^3\setminus B(0,2)$ and $\chi\equiv0$ in $B(0,1)$. We recall that $\mathfrak{B}_{\lambda_1,\lambda_2}$ stands for the Bogovskii operator lifting the divergence  on the annulus $B(0,\lambda_2)\setminus \overline{B(0,\lambda_1)}$. 
 
 \medskip
 
By standard ellipticity arguments $U[A,\mathcal B],P[A,\mathcal B]$ are $C^{\infty}$ on $\mathbb R^3 \setminus \overline{\mathcal B}.$ Let define
\begin{align*}
\bar{U}[A,\mathcal{B}](x) & :=U[A,\mathcal{B}](x)\chi(x)-\mathfrak{B}_{1,2}[U[A,\mathcal{B}] \cdot \nabla \chi](x) \\
\bar{P}[A,\mathcal{B}](x) & :=P[A,\mathcal{B}](x)\chi(x).
\end{align*}
Up to a mollifying argument that we skip for conciseness, we may assume that $\bar{U}[A,\mathcal B]
\in C^{\infty}(\mathbb R^3).$ The pair $(\bar{U}[A,\mathcal B],\bar{P}[A,\mathcal{B}])$ satisfies then the Stokes equation on $\RR^3$ with source term $f_\chi[A,\mathcal{B}]  = -{\rm div} \, \bar{\Sigma}[A,\mathcal{B}]  \in C^{\infty}_c(B(0,2)\setminus \overline{B(0,1)})$ where:
\[
\bar{\Sigma}[A,\mathcal{B}]:=-\bar{P}[A,\mathcal{B}]\mathbb{I}_3+2D(\bar{U}[A,\mathcal{B}]).
\]
Since $\bar{U}[A,\mathcal B] \in  \dot{H}^1(\mathbb R^3)$ and we have uniqueness of $\dot{H}^1$-solutions to the Stokes equations on $\mathbb R^3,$ we may use the Green function $\mathbf{U}$  to compute $\bar{U}[A,\mathcal B]$. This entails that, for each $i=1,2,3,$
we have:
\beq
\bar{U}[A,\mathcal{B}]_i=\int_{\RR^3}f_\chi[A,\mathcal{B}](y)\cdot\mathcal{U}^i(x-y)dy.
\enq
In particular, for $|x|>2$ and $i=1,2,3$ (where $\bar{U}[A,\mathcal{B}]$ coincides with $U[A,\mathcal{B}]$), a Taylor expansion yields:
\beq
U[A,\mathcal{B}]_i(x)&=&\int_{\RR^3}f_\chi[A,\mathcal{B}](y)dy\cdot\mathcal{U}^i(x)-\int_{\RR^3}f_{\chi}[A,\mathcal{B}](y)\otimes ydy:\nabla\mathcal{U}^i(x)\\
&&+\sum_{|\alpha|=2}\int_{\RR^3}f_\chi[A,\mathcal{B}](y)\cdot\int_0^1(1-t)y^{\alpha}D^{\alpha}\mathcal{U}^{i}(x-ty)dtdy.
\\
&=& T_0 + \mathcal K[A,\mathcal B](x)+ \mathcal H[A,\mathcal B](x).
\enq
Concerning $T_0$, we notice that
\beq
\int_{\RR^3}f_{\chi}[A,\mathcal{B}](y)dy&=& \int_{B(0,3)\setminus B(0,1)}
{\rm div} \bar{\Sigma}[A,\mathcal{B}] \\
&=&\int_{\p B(0,3)}\bar{\Sigma}[A,\mathcal{B}](y)nd\sigma(y)=\int_{\p B(0,3)}\Sigma[A,\mathcal{B}](y)nd\sigma(y)\\
&=&\int_{\p\mathcal{B}}\Sigma[A,\mathcal{B}](y)nd\sigma(y) =0
\enq
Hence $T_0=0$. To analyse $\mathcal K[A,\mathcal B](x)$, we denote:
\beq
{\mathcal{M}}[A,\mathcal{B}]:=\frac{1}{2}\Bigl(\int_{\RR^3}f_{\chi}[A,\mathcal{B}](y)\otimes ydy+(\int_{\RR^3}f_{\chi}[A,\mathcal{B}](y)\otimes ydy)^T\Bigr)
\enq
and
\beq
\mathcal{A}[A,\mathcal{B}]:=\frac{1}{2}\Bigl(\int_{\RR^3}f_{\chi}[A,\mathcal{B}](y)\otimes ydy-(\int_{\RR^3}f_{\chi}[A,\mathcal{B}](y)\otimes ydy)^T\Bigr).
\enq
First, for arbitrary skew-symmetric matrix $E$, there holds:
\beq
\int_{\RR^3}f_{\chi}[A,\mathcal{B}](y)\cdot(Ey)dy&=&\int_{B(0,3)\setminus B(0,1)}f_{\chi}[A,\mathcal{B}](y)\cdot(Ey)dy\\
&=&\int_{\p B(0,3)}\big[\bar{\Sigma}[A,\mathcal{B}]n\big]\cdot Eyd\sigma(y)-\int_{B(0,3)\setminus B(0,1)}\bar{\Sigma}[A,\mathcal{B}]:\nabla (Ey)dy,
\enq
On the right-hand side, we have, since $\bar{\Sigma}[A,\mathcal{B}]$ 
is symmetric and $E$ is skew-symmetric:
\beq
\int_{B(0,3)\setminus B(0,1)}\bar{\Sigma}[A,\mathcal{B}]:\nabla (Ey)dy=\int_{B(0,3)\setminus B(0,1)}\bar{\Sigma}[A,\mathcal{B}]: E\,  dy=0.
\enq
We also notice that 
\beq
\int_{\p B(0,3)}\big[\bar{\Sigma}[A,\mathcal{B}]n\big]\cdot Eyd\sigma(y)&=&\int_{\p B(0,3)}\big[\Sigma[A,\mathcal{B}]n\big]\cdot Eyd\sigma(y)\\
&=&\int_{B(0,3)\setminus B(0,1)}\Sigma[A,\mathcal{B}]:\nabla (Ey)dy - \int_{\p\mathcal{B}}\big[\Sigma[A,\mathcal{B}]n\big]\cdot Eyd\sigma(y)\\
&=& - \int_{\p\mathcal{B}}\big[\Sigma[A,\mathcal{B}]n\big]\cdot Eyd\sigma(y) = 0.
\enq
To obtain the last equality, we use that since $E$ is skew-symmetric, there is a vector $e$ such that $Ex=e\times x$ and:
\beq
\int_{\p\mathcal{B}}\big[\Sigma[A,\mathcal{B}]n\big]\cdot Eyd\sigma(y)=\int_{\p \mathcal{B}}\Sigma[A,\mathcal{B}] n\times yd\sigma(y)\cdot e=0.
\enq
Therefore we obtain that $\mathcal{A}[A,\mathcal{B}]=0$. Consequently, we 
have 
\[
\mathcal K[A,\mathcal B](x)= {\mathcal M}[A,\mathcal B] : \nabla \mathcal U^i(x).
\]
Since 
${\mathcal M}[A,\mathcal B]$ is symmetric, we deduce that:
\[
\mathcal K[A,\mathcal B](x)= {\mathcal M}[A,\mathcal B] : D(\mathcal U^i)(x) = \mathbb P_{3,\sigma}  [{\mathcal M}[A,\mathcal B] ] :  D(\mathcal U^i)(x) 
= \mathbb P_{3,\sigma}[{\mathcal M}[A,\mathcal B]]  :  \nabla \mathcal U^i(x). 
\]
So, we set $\mathbb M[A,\mathcal B] =\mathbb P_{3,\sigma}[{\mathcal M}[A,\mathcal B]]$ and we turn to show  \eqref{eq_defM} and $|\mathbb{M}[A,\mathcal{B}]|\leq C|A|$. To this end, we notice  that $\mathbb{M}[A,\mathcal B]$ is completely fixed by its action on matrices $S \in {\rm Sym}_{3,\sigma}(\mathbb R).$ 
So, let fix $S \in {\rm Sym}_{3,\sigma}(\mathbb R).$ We have
\beq
\int_{\RR^3}f_{\chi}[A,\mathcal{B}](y)\otimes y dy:S=\int_{\RR^3}f_{\chi}[A,\mathcal{B}](y)\cdot(Sy)dy=\int_{B(0,3)\setminus \mathcal{B}}f_{\chi}[A,\mathcal{B}](y)\cdot(Sy)dy.
\enq
Applying that $\mathrm{div}\bar{\Sigma}[A,\mathcal{B}]=f_\chi[A,\mathcal{B}]$ again, we obtain
\beq
\int_{B(0,3)\setminus \mathcal{B}}f_{\chi}[A,\mathcal{B}](y)\cdot(Sy)dy&=&\int_{\p B(0,3)}(\bar{\Sigma}[A,\mathcal{B}]n)\cdot(Sy)d\sigma(y)-\int_{B(0,3)\setminus \mathcal{B}}\bar{\Sigma}[A,\mathcal{B}]:Sdy\\
&=&\int_{\p B(0,3)}(\bar{\Sigma}[A,\mathcal{B}]n)\cdot(Sy)d\sigma(y)-2\int_{B(0,3)\setminus \mathcal{B}}D(\bar{U}[A,\mathcal{B}]):Sdy\\
&&+\int_{B(0,3)\setminus \mathcal{B}}\bar{P}[A,\mathcal{B}]\mathbb{I}_3:Sdy
\enq
Since $S$ is trace-free, the last pressure term vanishes.
We rewrite the first term on the right-hand side: 
\beq
\int_{\p B(0,3)}(\bar{\Sigma}[A,\mathcal{B}]n)\cdot(Sy)d\sigma(y)&=& \int_{\p B(0,3)}(\Sigma[A,\mathcal{B}]n)\cdot(Sy)d\sigma(y)\\
&=& - \int_{\p\mathcal{B}}(\Sigma[A,\mathcal{B}]n)\cdot(Sy)d\sigma(y)+2\int_{B(0,3)\setminus\mathcal{B}}D(U[A,\mathcal{B}]):Sdy
\enq
 This entails that:
\beq
\int_{B(0,3)\setminus \mathcal{B}}f_{\chi}[A,\mathcal{B}](y)\cdot(Sy)dy&=& - \int_{\p\mathcal{B}}(\Sigma[A,\mathcal{B}]n)\cdot(Sy)d\sigma(y)\\
& &+2 \int_{B(0,3)\setminus\mathcal{B}}D(U[A,\mathcal{B}]-\bar{U}[A,\mathcal{B}]):Sdy.
\enq
We recall that pressure term do vanish since $S$ is trace-free.
Concerning the first integral on the right-hand side, we notice again that: 
\beq
\int_{\p\mathcal{B}}(\Sigma[A,\mathcal{B}]n)\cdot(Sy)d\sigma(y)= \mathbb P_{3,\sigma}\left[ \int_{\p\mathcal{B}}(\Sigma[A,\mathcal{B}]n)\otimes yd\sigma(y)\right]:S.
\enq
We are then in position to apply Lemma \ref{resistence} which yields that
\begin{equation} \label{eq_controlbord}
\Big|\int_{\p\mathcal{B}}(\Sigma[A,\mathcal{B}]n)\cdot(Sy)d\sigma(y)\Big|\leq C|A||S|,
\end{equation}
for an absolute constant $C.$
We proceed with the second integral. We notice that $U[A,\mathcal{B}](x)=\bar{U}[A,\mathcal{B}](x)$ for any $|x|>2$ and $\bar{U}[A,\mathcal{B}](x)=0$ on $\p\mathcal{B}$. Hence
\beq
\int_{B(0,3)\setminus\mathcal{B}}D(U[A,\mathcal{B}]-\bar{U}[A,\mathcal{B}]):Sdy&=&\int_{B(0,3)\setminus\mathcal{B}}D\big[U[A,\mathcal{B}]-\bar{U}[A,\mathcal{B}]\big]dy:S\\
&=& \int_{\p B(0,3) \cup \p \mathcal B} (U[A,\mathcal{B}]-\bar{U}[A,\mathcal{B}]) \otimes n  d\sigma  : S  \\
&=& \int_{\p \mathcal B} U[A,\mathcal{B}] \otimes n  d\sigma : S\\
&=& A:S\ |\mathcal{B}|,
\enq
where we applied that $U[A,\mathcal B](y) = Ay + V + \omega \times y $ 
to obtain the last identity. Gathering the previous computations 
we obtain that, for arbitrary $S \in {\rm Sym}_{3,\sigma}(\mathbb R)$, 
there holds:
\begin{align*}
\mathbb M[A,\mathcal B] : S & = \int_{\mathbb R^3} f_{\chi}[A,\mathcal B](y) \cdot S y dy \\
& = - \mathbb{P}_{3,\sigma} \left[\int_{\partial \mathcal B} \Sigma[A,\mathcal B] n \otimes y d\sigma(y) \right] : S +2  \int_{\partial B} U[A,\mathcal B](y) \otimes nd\sigma(y) : S.
\end{align*}
This concludes the proof of \eqref{eq_defM}.  Recalling that $\mathcal{B} \subset B(0,1)$, and applying the explicit computation of the last integral 
in this latter identity, we obtain also $|\mathbb{M}[A,\mathcal{B}]|\leq C|A|.$ 

\medskip 

To finish the proof, we handle the last term $\mathcal H[A,\mathcal B](x)$ for $|x|>4$. We prove the required estimate for $\beta =(0,0,0),$ the extension to arbitrary $\beta$ being obvious.  
Given $|\alpha|=2$, and $|x|>2$, we can find $\phi_x\in C^{\infty}(\RR^3)$
with $\mathrm{Supp}(\phi_x)\subset \overline{B(0,3)}\setminus B(0,1)$
such that:
\beq
\int_0^1(1-t)y^{\alpha}D^{\alpha}\mathcal{U}^{i}(x+ty)dt =:\phi_x(y) 
\quad 
\forall \, y \in \mathrm{Supp}(\chi) 
\enq
Asymptotic properties of $\mathcal U^i$ entail then that
$\|\phi_x\|_{W^{1,\infty}(\mathbb R^3)} \leq C/|x|^3.$  
Therefore, we have, thanks to the uniform bound \eqref{eq_unifbound}
and the embedding $\dot{H}^1(\mathbb R^3) \subset L^2_{loc}(\mathbb R^3):$
\begin{multline*}
\Big|\int_{\RR^3}f_\chi[A,\mathcal{B}](y)\cdot\int_0^1(1-t)y^{\alpha}D^{\alpha}\mathcal{U}^{i}(x+ty)dtdy\Big| \\
\begin{array}{l}
 \leq C\|f_\chi[A,\mathcal{B}]\|_{\dot{H}^{-1}(B(0,3)\setminus \overline{B(0,1)})}\|\phi_x\|_{{H}^1_0(B(0,3) \setminus \overline{B(0,1)})} \\[8pt]
\leq C\|\bar{U}[A,\mathcal{B}]\|_{\dot{H}^1(B(0,3)\setminus \overline{B(0,1)})}\frac{1}{|x|^3}\leq C\|U[A,\mathcal{B}]\|_{\dot{H}^1(\RR^3)}\frac{1}{|x|^3}\leq C \frac{|A|}{|x|^3}.
\end{array}
\end{multline*}
This ends the proof of the proposition.
\end{proof}


We end this section by having a look to the interactions between the decomposition in Proposition \ref{asym-stokes} with scaling properties of the Stokes problem \eqref{eq_Stokessec}-\eqref{eq_Newtsec}. Indeed, for any  $a < 1$,
standard scaling arguments imply that:
\[
U[A,a \mathcal{B}](x)=aU[A,\mathcal{B}](x/a),
\qquad 
P[A,a\mathcal{B}](x)=P[A,\mathcal{B}](x/a).
\]
Consequently for arbitrary $w \in \mathbb S^2,$ we have:
\[
\lim_{t \to \infty} t^2 U[A,a \mathcal{B}](tw) =a^3 \lim_{t\to \infty} t^2 U[A,\mathcal{B}](tw)
\] 
This entails that $\mathbb M[A,a\mathcal B]= a^3\mathbb M[A,\mathcal B]$  and 
\begin{equation} \label{eq_scalelead}
\mathcal{K}[A,a\mathcal{B}](x)=a^3\mathcal{K}[A,\mathcal{B}](x).\end{equation}
We can then compare remainder terms. This yields:
\begin{equation} \label{eq_scaleremainder}
|\nabla^{\beta} \mathcal{H}[A,a \mathcal{B}](x)| \leq \dfrac{C|A|a^4}{|x|^{3+|\beta|}}
\quad 
\forall \, |x| > 4a.
\end{equation}

\section{Approximation of the solution to the $N$-particle problem}
\label{sec_refmet}

In this section, we fix $N$ large and $A \in {\rm Sym}_{3,\sigma}(\mathbb R).$ 
We provide an approximation  {\em via} the method of reflections for the solution $(u_N,p_N)$ to
\begin{align}
\label{eq_Stokes_ref}
& \left\{
\begin{aligned}
- {\rm div}\, \Sigma_{\mu}(u,p) &= 0 \\
{\rm div}\, u &= 0 
\end{aligned}
\right.
\qquad \qquad
\text{ in $\mathbb R^3 \setminus \bigcup_{l=1}^N \overline{B}_l,$}
\\
\label{bc_Stokes_ref}
& \left\{
\begin{aligned}
& \; u(x) = V_l + \omega_l \times (x-x_l) && \text{ on $\partial B_l,$ for $l=1,\ldots,N$} \\ 
& \; u(x) = Ax && \text{ at infinity,}
\end{aligned}
\right.
\\
\label{eq_Newton_ref}
& \left\{
\begin{aligned}
\int_{\partial B_l} \Sigma_{\mu}(u,p) n {\rm d}s &= 0 \\
\int_{\partial B_l} (x-x_l) \times \Sigma_{\mu}(u,p) n {\rm d}s &= 0 \\
\end{aligned}
\right.
\qquad \qquad  \text{for }\, l = 1,\ldots,N.
\end{align}
We recall that the method of reflections consists in matching the boundary conditions on each particle by solving a Stokes system around each particle, gluing together the local solutions into one approximation and iterating the process, since by gluing the local solutions we alter the boundary values of the approximation. More precisely, we first define 
\[
u_{app}^{(0)}(x):=Ax, \qquad A^{(0)}_l:=A \text{ for $l=1,\dots,N$.}
\]
Given $n \geq 0$ and assuming that a vector-field $u_{app}^{(n)}$ and 
matrices $(A_{l}^{(n)})_{l=1,\ldots,N}$ are constructed  we set:
\begin{align} \label{eq_iter_matrix}
A^{(n+1)}_l &:= \sum_{l\neq\lambda}D(\mathcal{K})[A^{(n)}_\lambda,{B}_\lambda](x_l-x_\lambda)  && \forall \, l=1,\ldots,N \\
v^{(n+1)}(x) & :=  \sum_{l=1}^N U[A^{(n)}_{l},{B}_l](x-x_l) && 
\text{ on $\mathcal F^N$}  \label{eq_iter_corrector}\\
u_{app}^{(n+1)}(x)&:=  u_{app}^{(n)}(x)+v^{(n+1)}(x) && 
\text{ on $\mathcal F^N$}. \label{eq_iter_uapp}
\end{align}
Correspondingly, we compute a sequence of approximate pressure:
\[
p_{app}^{(0)}  = 0, \qquad
p_{app}^{(n+1)} = p_{app}^{(n)} + \mu \sum_{l=1}^{N} P[A^{(n)}_{l},{B}_l] \,, 
\qquad  
\forall \, n \in \mathbb N.
\]
The factor $\mu$ is introduced here since $(U,P)$ solves a Stokes system without viscosity.

\medskip

The motivation of these definitions is the following remark. For each $n \geq 0$
the flow $v^{(n+1)}$ cancels the first order symmetric term of the leading part of  the boundary value of $u_{app}^{(n)}$ on each $\partial B_l$. For instance, for $n=0$, we notice that on $\partial B_l$, it holds:
\beq
&&u^{(0)}(x)=Ax=Ax_l+A(x-x_l),\\
&&v^{(1)}(x)=-A(x-x_l) + \sum_{l\neq\lambda} U[A,{B}_l](x-x_\lambda),
\enq
which implies that:
\beq
u^{(1)}=Ax_l + \sum_{l\neq\lambda} U[A,{B}_l](x-x_\lambda).
\enq
Since $\varepsilon_0$ is very small, meaning that $a\ll d$, by Proposition \ref{asym-stokes} and \eqref{eq_scalelead}, we have that for each $\lambda\neq l$ and any $x\in\partial B_l$:
\beq
U[A,{B}_l](x-x_\lambda)&=&a^3\mathcal{K}[A,\mathcal{B}_\lambda](x-x_\lambda)+\mathcal{H}[A,{B}_\lambda](x-x_\lambda).
\enq
where $\mathcal{H}[A,{B}_\lambda](x-x_\lambda) << \mathcal{K}[A,B_\lambda](x-x_\lambda)$ since $|x-x_l| >> a$ on $\partial B_l.$ 
On the other hand, by Taylor expansion, for any  $x\in\partial B_l$ and any $\lambda\neq l$,
\beq
a^3\mathcal{K}[A,\mathcal{B}_\lambda](x-x_\lambda)=\mathrm{constant}+\mathrm{rotation}+a^3D(\mathcal{K})[A,\mathcal{B}_\lambda](x_l-x_\lambda)(x-x_l)+O(|x-x_l|^2).
\enq
Hence in the reflection method, we aim at canceling the symmetric gradient $a^3D\mathcal{K}[A,\mathcal{B}_\lambda](x_l-x_\lambda)(x-x_l)$. 
By a direct iteration,  we obtain that, for any $x\in\partial B_l$ and $n\geq 2,$
there holds:
\begin{align} \label{eq_splitboundary}
u^{(n)}_{app}(x) =&\mathrm{constant}+\mathrm{rotation} \\
&+a^3 \sum_{l\neq \lambda} \mathcal{K}[A^{(n-1)}_\lambda,\mathcal{B}_\lambda](x-x_\lambda) \notag \\
& + a^3 \sum_{j=0}^{n-2}
\sum_{l\neq\lambda}\Big( \mathcal{K}[A^{(j)}_\lambda,\mathcal{B}_\lambda](x-x_\lambda) - \mathcal{K}[A^{(j)}_\lambda,\mathcal{B}_\lambda](x_l-x_\lambda) 
\notag \\
& \qquad - [\nabla \mathcal{K}[A^{(j)}_\lambda,\mathcal{B}_\lambda](x_l-x_\lambda)](x-x_l)\Big) \notag \\
& +\sum_{j=0}^{n-1}\sum_{l\neq\lambda}\mathcal{H}[A^{(j)}_\lambda,{B}_\lambda](x-x_\lambda)\notag 
\end{align}
The purpose of this section is twofold. First, we show that the method of reflections converges. We quantify then how close the family of approximations
 $(u_{app}^{(n)})_{n\in \mathbb N}$ are  to the velocity-field  $u_N$
 solution to \eqref{eq_Stokes_ref}-\eqref{bc_Stokes_ref}-\eqref{eq_Newton_ref}.

\medskip

We start with the convergence of the method. Since the correctors are fixed
with respect to the family of matrices $(A_{l}^{(n)})_{l=1,\ldots,N}$ this amounts
to prove that this family of matrices defines a converging series (in $n$ for arbitrary $l\in \{1,\ldots,N\}$). This is the content of the following proposition 
which relies mostly on item (1) of Lemma \ref{modified goal} in Appendix \ref{app_ref}:
\begin{proposition}\label{Al}
There exists $\varepsilon_0 >0$ sufficiently small such that, for $\varepsilon < \varepsilon_0$ and $1<q<\infty$, there exists a constant $C(q,\varepsilon_0)$ depending on $q$ and $\varepsilon_0$, but independent of $N$, such that 
\beq
\Big(\sum_{l=1}^N |A^{(n+1)}_{l}|^q\Big)^{1/q}\leq C(q,\varepsilon_0)\left(\frac{a}{d}\right)^{3-3/q}\Big(\sum_{l=1}^N |A^{(n)}_{l}|^q\Big)^{1/q}
\quad \forall \, n \in \mathbb N.
\enq
\end{proposition}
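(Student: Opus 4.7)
The plan is to recognize the iteration
\[
A_l^{(n+1)} = \sum_{\lambda \neq l} D(\mathcal K)[A_\lambda^{(n)}, B_\lambda](x_l - x_\lambda)
\]
as a discrete Calder\'on--Zygmund operator acting on the cloud $(x_l)_{l=1,\ldots,N}$, and then to transfer the continuous $L^q$-boundedness of such operators into the desired $\ell^q \to \ell^q$ estimate. Using the scaling identity $\mathcal K[A, a\mathcal B] = a^3 \mathcal K[A,\mathcal B]$ from \eqref{eq_scalelead} together with the explicit formula $\mathcal K[A,\mathcal B_\lambda]_i(x) = \mathbb M[A,\mathcal B_\lambda] : \nabla \mathcal U^i(x)$ of Proposition \ref{asym-stokes}, one first rewrites
\[
A_l^{(n+1)} = a^3 \sum_{\lambda \neq l} \mathcal T(x_l - x_\lambda)\, M_\lambda, \qquad M_\lambda := \mathbb M[A_\lambda^{(n)},\mathcal B_\lambda],
\]
with a \emph{universal} matrix-valued kernel $\mathcal T$ built from second derivatives of the Stokes fundamental solution $\mathbf U$; this $\mathcal T$ is $(-3)$-homogeneous, smooth outside the origin, and carries the mean-value cancellation of a classical Calder\'on--Zygmund kernel. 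Proposition \ref{asym-stokes} additionally gives $|M_\lambda| \leq C|A_\lambda^{(n)}|$ uniformly in the shape $\mathcal B_\lambda \subset B(0,1)$, so it suffices to establish an $\ell^q \to \ell^q$ bound on the discrete operator $(M_\lambda) \mapsto \bigl(\sum_{\lambda \neq l} \mathcal T(x_l - x_\lambda) M_\lambda\bigr)_l$ --- this is precisely the content of item (1) of Lemma \ref{modified goal}.

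To prove such a discrete inequality I would spread each matrix $M_\lambda$ into the ball $B(x_\lambda, d/4)$ --- these being pairwise disjoint by the separation assumption \eqref{H2} --- through the function $F(x) := \sum_\lambda M_\lambda \mathbf{1}_{B(x_\lambda, d/4)}(x)$, which satisfies $\|F\|_{L^q}^q \sim d^3 \sum_\lambda |M_\lambda|^q$. The continuous CZ operator with kernel $\mathcal T$ is bounded on $L^q(\mathbb R^3)$ for every $1<q<\infty$, so $\|\mathcal T F\|_{L^q} \leq C(q)\|F\|_{L^q}$. A Taylor expansion of $z \mapsto \mathcal T(y-z)$ around each center $x_\lambda$, combined with the symmetry of the balls killing the first-order term, shows that for $y \in B(x_l, d/4)$,
\[
\mathcal T F(y) = c\, d^3 \sum_{\lambda \neq l} \mathcal T(x_l - x_\lambda) M_\lambda + \mathcal E_l(y),
\]
where the error $\mathcal E_l(y)$ consists of a diagonal term of size $|M_l|$ (finite precisely because of the CZ cancellation of $\mathcal T$ on $B(x_l, d/4)$) and a Taylor remainder of size $\sum_{\lambda \neq l} d^4 |M_\lambda|/|x_l - x_\lambda|^4$. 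Integrating $|\mathcal T F|^q$ over the disjoint balls and applying the $L^q$-bound transfers the control to $|A_l^{(n+1)}|^q$, while the Taylor remainder is absorbed by Schur's test applied to the off-diagonal matrix $d^4/|x_l - x_\lambda|^4$: the dyadic shell count $\#\{\lambda : md \leq |x_l - x_\lambda| \leq (m+1)d\} \lesssim m^2$ combined with the improved decay exponent $-4$ (one better than the critical $-3$) renders both its row and column sums uniformly finite. Multiplying by the prefactor $a^3$ then produces the advertised contraction factor.

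The main obstacle is to genuinely exploit the \emph{signed} cancellation of $\mathcal T$ rather than only its pointwise bound $|\mathcal T(y)| \leq C/|y|^3$. Indeed, a direct Schur test based on the absolute-value kernel $1/|x_l - x_\lambda|^3$ produces the critical dyadic sum $\sum_m m^2/(md)^3 \sim \log(\mathrm{diam}(K)/d)/d^3$, whose logarithmic divergence is exactly the obstruction limiting the analysis of \cite{Niet-Schub}. Working with the full CZ structure of $\mathcal T$, through the spreading into disjoint balls of radius $d/4$ and the $L^q$-boundedness of the continuous operator, is precisely what removes this logarithm --- and it is also why the restriction $1<q<\infty$ appears, the endpoints $q=1,\infty$ being exactly the values at which CZ operators fail to be bounded.
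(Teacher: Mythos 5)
Your proof is correct, and its first half is exactly the paper's proof of this proposition: rescale via $\mathbb M[A,a\mathcal B]=a^3\mathbb M[A,\mathcal B]$, write $\mathcal K[A,\mathcal B]_i=\mathbb M[A,\mathcal B]:\nabla\mathcal U^i$, use $|\mathbb M[A_\lambda^{(n)},\mathcal B_\lambda]|\leq C|A_\lambda^{(n)}|$ from Proposition \ref{asym-stokes}, and reduce to the discrete $\ell^q$ bound of item (1) of Lemma \ref{modified goal} with $V=U^{ij}$. Where you genuinely depart from the paper is in your self-contained proof of that lemma. The paper converts the point values $\partial^\alpha V(x_l-x_\lambda)$ into ball averages through the exact representation formula of Lemma \ref{mean-value}, exploiting that $\Delta V=Q$ with $Q$ harmonic away from the origin (so averages of $\partial^\alpha Q$ over concentric balls of different radii coincide); the residual terms are then identities controlled by Young's inequality with the kernel $\mathbf 1_{|z|>d/2}|z|^{-5}$. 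You instead spread the data over the disjoint balls $B(x_\lambda,d/4)$, Taylor-expand the $(-3)$-homogeneous kernel, kill the first-order term by symmetry of the balls, and absorb the $O(d^4/|x_l-x_\lambda|^4)$ remainder by a Schur test over dyadic shells. Both arguments rest on the same decisive mechanism --- transfer to a piecewise-constant density on disjoint balls and invoke the continuous $L^q$ Calder\'on--Zygmund theorem, which is what removes the $\ln N$ of \cite{Niet-Schub} --- but yours needs only smoothness and homogeneity of the kernel rather than the harmonicity of $\Delta V$, and it yields the slightly stronger contraction factor $(a/d)^3$ (the paper's $(a/d)^{3-3/q}$ arises from averaging at the target point over the small ball $B(x_l,a)$ instead of $B(x_l,d/4)$); either factor proves the proposition. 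Two details you should spell out to make the argument airtight: the vanishing spherical mean of $\mathcal T$ is not automatic from the pointwise bound but does hold here, since second derivatives of a $(-1)$-homogeneous function integrate to zero over annuli; and the diagonal term in $\mathcal E_l$ is most safely bounded by applying the $L^q$ Calder\'on--Zygmund estimate directly to the single bump $M_l\mathbf 1_{B(x_l,d/4)}$, because the pointwise principal-value integral over the off-centered ball degenerates logarithmically as $y$ approaches $\partial B(x_l,d/4)$.
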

\begin{proof}
Let $n \geq 0.$ We first notice that, by Proposition \ref{asym-stokes}, there exists symmetric matrices $\mathbb M^{(n)}_l := \mathbb M[A^{(n)}_{l},\mathcal B_l]$ such that 
\[
\mathcal K[A^{(n)}_{l},B_l]_i = a^3 \mathbb M^{(n)}_l : \nabla \mathcal U^i = 
a^3 \sum_{k=1}^3 [\mathbb M^{(n)}_l ]_{kl} \partial_{k} U^{i,l}, \quad
i=1,2,3.
\] 
We remark then that, for $i,j\in\{1,2,3\}$, $U^{ij}$ is homogeneous in $\RR^3\setminus\{0\}$ with degree $-1$. Moreover $U^{ij}$ satisfies that 
\beq
\Delta U^{ij}=\p_i q_j~~\mathrm{in}~~\RR^3\setminus\{0\},
\enq
where for each $j\in\{1,2,3\}$, $q_j$ is harmonic in $\RR^3\setminus\{0\}$. We can apply then Lemma \ref{modified goal} to the computation of the components of $A^{(n+1)}_{l}$ by choosing $V:=U^{ij}$ and $Q:=\p_i q_j$ for each $i,j\in\{1,2,3\}.$ This yields that, for $\varepsilon_0$ sufficiently small 
and arbitrary $q \in (1,\infty)$
\[
\Big(\sum_{l=1}^N |A^{(n+1)}_{l}|^q\Big)^{1/q}\leq C(q,\varepsilon_0)\left(\frac{a}{d}\right)^{3-3/q} \left( \sum_{l=1}^{N}|\mathbb M_l^{(n)}|^q \right)^{1/q}.
\]
However, by Proposition \ref{asym-stokes}, there exists an absolute constant
(independent of $n,l$ and other parameters) such that $|\mathbb M_l^{(n)}| \leq C|A^{(n)}_l|.$
This completes the proof of the proposition.
\end{proof}
%

We proceed with the analysis of the quality of the sequence of  approximations
$(u^{(n)}_{app})_{n\in\mathbb N}$.

\begin{proposition} \label{prop_perfore}
Let  $\varepsilon_0$ sufficiently small. There exists a constant $C_{app}(\varepsilon_0)$, such that for $n \geq 3$ and $\varepsilon < \varepsilon_0,$ there holds
\beq
\|u_N-u^{(n)}_{app}\|_{\dot{H}^{1}(\mathbb R^3)}\leq C_{app}(\varepsilon_0)|A|\big(\frac{a}{d}\big)^{11/2} .
\enq
\end{proposition}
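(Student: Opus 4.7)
The plan is to recast $w^{(n)} := u_N - u_{app}^{(n)}$ as a Stokes flow in the perforated domain $\mathcal{F}_N$ and control it by the size of the boundary residual on $\bigcup_l\partial B_l$. Because every building block $U[A^{(j)}_\ell,B_\ell]$ of $u^{(n)}_{app}$ is a force- and torque-free Stokes field, and is smooth across $\partial B_l$ for $\ell\neq l$ (so $\int_{\partial B_l}\Sigma n\,{\rm d}s=0$ and $\int_{\partial B_l}(x-x_l)\times\Sigma n\,{\rm d}s=0$ follow from the divergence theorem applied inside $B_l$), the difference $w^{(n)}$ solves a homogeneous Stokes problem on $\mathcal{F}_N$, vanishes at infinity, and is force/torque-free on every $\partial B_l$. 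Its boundary value on $\partial B_l$ is an unknown rigid motion plus the explicit residual $\rho^{(n)}_l$ read off from \eqref{eq_splitboundary}: the unsubtracted cross term $-a^3\sum_{\lambda\neq l}\mathcal{K}[A^{(n-1)}_\lambda,\mathcal{B}_\lambda](x-x_\lambda)$, the second-order Taylor remainders of $\mathcal{K}[A^{(j)}_\lambda,\mathcal{B}_\lambda]$ for $j\leq n-2$, and the pieces $\mathcal{H}[A^{(j)}_\lambda,B_\lambda](x-x_\lambda)$ for $j\leq n-1$.

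I would then convert these traces into a global energy bound through a competitor construction. Inside the annulus $B(x_l,2a)\setminus\overline{B_l}$ (disjoint from every other particle because $d>4a$ by \eqref{H2}), I lift $-\rho^{(n)}_l$ by multiplication with a smooth cutoff and correct the divergence via the Bogovskii operator $\mathfrak{B}_{a,2a}$; summing these local lifts yields a divergence-free $\Psi^{(n)}\in\dot{H}^1(\mathbb{R}^3)$ supported in $\bigcup_l B(x_l,2a)$ whose trace on each $\partial B_l$ is $-\rho^{(n)}_l$. The modified field $u^{(n)}_{app}+\Psi^{(n)}$ then has rigid-motion boundary data on every $\partial B_l$, so $u_N-(u^{(n)}_{app}+\Psi^{(n)})$ is a force/torque-free Stokes solution in $\mathcal{F}_N$ whose trace is a pure rigid motion. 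Combining the variational characterization of $u_N-Ax$ from Section \ref{sec_cellpbm} with the analogous minimum property for the rigid-boundary Stokes problem produces an inequality of the shape
\[
\|u_N-u_{app}^{(n)}\|_{\dot{H}^1(\mathbb{R}^3)}\;\lesssim\;\|\Psi^{(n)}\|_{\dot{H}^1(\mathbb{R}^3)}+\Bigl(\sum_{l=1}^N a^3|A^{(n)}_l|^2\Bigr)^{1/2},
\]
where the second term accounts for the non-rigid strain of $u^{(n)}_{app}$ inside the particles; by Proposition \ref{Al} with $q=2$ and the packing bound $Na^3\lesssim|K|(a/d)^3$, that term is $O((a/d)^{3(n+1)/2}|A|)$, already better than $(a/d)^{11/2}$ for $n\geq3$.

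What remains is to estimate $\|\Psi^{(n)}\|_{\dot{H}^1}$. Proposition \ref{asym-stokes} and the scalings \eqref{eq_scalelead}--\eqref{eq_scaleremainder} give the pointwise bound $|\mathcal{H}[A^{(j)}_\lambda,B_\lambda](x-x_\lambda)|\lesssim|A^{(j)}_\lambda|\,a^4/d^3$ on $\partial B_l$, the estimate $O(|A^{(j)}_\lambda|\,a^5/d^4)$ for the second Taylor remainder of $a^3\mathcal{K}[A^{(j)}_\lambda,\mathcal{B}_\lambda]$ there, and the identity $a^3\mathcal{K}[A,\mathcal{B}_\lambda](\cdot-x_\lambda)=U[A,B_\lambda](\cdot-x_\lambda)-\mathcal{H}[A,B_\lambda](\cdot-x_\lambda)$ that rewrites the uncorrected cross term as a sum of single-particle Stokes fields up to an $\mathcal{H}$-piece. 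Using Cauchy--Schwarz over $\lambda$, the packing estimate $\sum_{\lambda\neq l}|x_l-x_\lambda|^{-2p}\lesssim d^{-2p}$ valid for $p\geq 2$, continuity of $\mathfrak{B}_{a,2a}$, and especially the $\ell^2$-bound $(\sum_\lambda|A^{(j)}_\lambda|^2)^{1/2}\lesssim C^{j}(a/d)^{3j/2}N^{1/2}|A|$ from Proposition \ref{Al}, each of the three contributions is controlled. Summing them while requiring $n\geq3$ (so that the geometric decay of Proposition \ref{Al} has been applied at least twice) produces the advertised rate $(a/d)^{11/2}|A|$.

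The delicate step will be the sharp $\dot{H}^1$-bound on the sum of single-particle Stokes fields $\sum_\lambda U[A^{(n-1)}_\lambda,B_\lambda](\cdot-x_\lambda)$: a naive triangle inequality loses unacceptable factors of $N$, so one must exploit the Calder\'on--Zygmund structure underlying Lemma \ref{modified goal} of Appendix \ref{app_ref} (the same mechanism that makes Proposition \ref{Al} $N$-independent) in order to derive an $N$-uniform $\ell^2$-type estimate on this superposition. Once that orthogonality-type bound is in hand, it combines with Proposition \ref{Al} and the packing estimates to yield the clean power $(a/d)^{11/2}$.
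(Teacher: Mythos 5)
Your proposal follows essentially the same route as the paper: identify the boundary residual $u^*_{l,n}=S^{(n)}_l+R^{(n)}_l$ from \eqref{eq_splitboundary}, lift it near each particle by a cutoff corrected with the Bogovskii operator, invoke the variational (minimizing) characterization of $u_N-u^{(n)}_{app}$ to reduce everything to the $\dot H^1$ norm of that lift, and control the resulting sums $N$-uniformly via the Calder\'on--Zygmund estimates of Lemma \ref{modified goal} combined with Proposition \ref{Al}. The only imprecision is the passing claim that $u_N-(u^{(n)}_{app}+\Psi^{(n)})$ is itself a Stokes solution (it is not, since $\Psi^{(n)}$ is merely a divergence-free lift, not a solution), but your argument never uses this --- the minimization principle you then invoke is exactly the paper's mechanism, and note that the paper only needs the local bounds of item (2) of Lemma \ref{modified goal} on the annuli $B(x_l,2a)$ rather than the global $\dot H^1$ bound on the superposition of single-particle fields that you flag as the delicate step.
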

\begin{proof}

By substracting the equations satisfied by 
$(u_N,p_N)$ and $(u^{(n)}_{app},p^{(n)}_{app}),$ we obtain that $\delta_u  = u_N - u^{(n)}_{app},$ $\delta_p = p_N - p^{(n)}_{app}$
satisfies:
\begin{align}
\label{eq_Stokes_delta u}
& \left\{
\begin{aligned}
- {\rm div}\, \Sigma_{\mu}(\delta_u,\delta_p) &= 0 \\
{\rm div}\, \delta_u &= 0 
\end{aligned}
\right.
\qquad \qquad
\text{ in $\mathbb R^3 \setminus \bigcup_{l=1}^N \overline{B}_l,$}
\end{align}
and
\begin{align}
\label{eq_Newton_deltaU}
\left\{
\begin{aligned}
\int_{\partial B_l} \Sigma_{\mu}(\delta_u,\delta_p) n {\rm d}s &= 0 \\
\int_{\partial B_l} (x-x_l) \times \Sigma_{\mu}(\delta_u,\delta_p) n {\rm d}s &= 0 \\
\end{aligned}
\right.
\qquad \qquad  \text{for }\, l = 1,\ldots,N.
\end{align}
As for boundary conditions, we note that by  definition of $u_{app}^{(n)}$, we have that
\begin{align*}
\delta_u(x)& =u_N(x)-Ax-\sum_{j=1}^{n} v^{(j)}(x), \qquad \text{ in $\mathcal F_N$} 
\end{align*}
Thanks to \eqref{eq_integrabilite} and extending the velocities $u_{app}^{(n)}$
and $u_N$ inside the particle domains with their boundary values,  
we have that $\delta_u \in \dot{H}^1(\mathbb R^3).$ On the boundaries,
reorganizing the terms involved in $v^{(j)},$ see also \eqref{eq_splitboundary},
we have that there exists vectors $(W_{l},\varpi_l)_{l=1,\ldots,N}$ for which:
\begin{equation} \label{bc_delta_u}
   \delta_u(x)=W_l+\varpi_l\times(x-x_l)+u_{l,n}^*, ~~\mathrm{on} ~~B_l,~~\forall l=1,\ldots,N,\\
\end{equation}
where we have $u_{l,n}^*=S^{(n)}_l+R^{(n)}_l$ with:
\beq
S^{(n)}_l(x):=\sum_{l\neq\lambda}a^3\mathcal{K}[A^{(n-1)}_\lambda,\mathcal{B}_\lambda](x-x_\lambda),
\enq
and
\beq
R^{(n)}_l(x)&:=&\sum_{j=0}^{n-2}\sum_{l\neq\lambda}a^3\mathcal{K}[A^{(j)}_\lambda,\mathcal{B}_\lambda](x-x_\lambda)-\sum_{j=0}^{n-2}\sum_{l\neq\lambda}a^3\mathcal{K}[A^{(j)}_\lambda,\mathcal{B}_\lambda](x_l-x_\lambda)\\
&&-\sum_{j=0}^{n-2}\sum_{l\neq\lambda}a^3[\nabla \mathcal{K}[A^{(j)}_\lambda,\mathcal{B}_\lambda](x_l-x_\lambda)](x-x_l)+\sum_{j=0}^{n-1}\sum_{l\neq\lambda}\mathcal{H}[A^{(j)}_\lambda,{B}_\lambda](x-x_\lambda).
\enq

We notice that for each $l=1,\dots,N$, the formula defining $u^*_{l,n}$ can be extended to $B(x_l,2a)$. We also mention that again
classical integration by parts arguments yield that $\delta_u$ realizes
\begin{equation}\label{min}
\min\Big\{ \int_{\mathcal{F}_N}|D(v)|^2, \; v\in\dot{H}^{1}(\mathbb R^3),\quad {\rm div}\ v=0, \;  D(v - u_{l,n}^*) = 0 \text{ on $B_l$},~\forall\, l\Big\}.
\end{equation}
The proof of our theorem then reduces to construct divergence-free vector-fields $w_{l,n} \in C^{\infty}_c(B(x_l,2a))$ that match $u^*_{l,n}$ 
(up to a rigid vector-field) on $B_l$ for each $l=1,\dots,N$. 
Indeed, since $\delta_u$ is divergence-free and using the minimizing principle
of \eqref{min}, we have then:
\[
\int_{\mathbb R^3} |\nabla \delta_u|^2 = 2 \int_{\mathbb R^3} |D(\delta_u)|^2 
 \leq  C \sum_{l=1}^N \int_{B(x_l,2a)} |\nabla w_{l,n}|^2.
\]
So, we define:
\beq
w_{l,n}(x):=\sum_{l=1}^N\big(\chi(\frac{x-x_l}{a})(u^*_{l,n}(x)-\bar{u}^*_{l,n})-\mathfrak{B}_{B(x_l,2a)\setminus\overline{ B(x_l,a)}}[(u^*_{l,n}(x)-\bar{u}^*_{l,n})]\cdot \nabla \chi(\frac{x-x_l}{a})\big),
\enq
Here, we denoted $\chi\in C_0^\infty(\RR^3)$ such that $\chi\equiv1$ on $B(0,3/2)$ and $\chi\equiv0$ in $\mathbb R^3 \setminus \overline{B(0,2)}$, $\bar{u}^*_{l,n}$ is the mean-value of $u^*_{l,n}$ over $B(x_l,2a)$. Clearly, our candidate matches the condition 
\[
w_{l,n}(x)=\mathrm{constant}+u^*_{l,n}, \quad \text {on $B_l.$}
\]
For the  next computations,  we introduce also $\bar{S}^{(n)}_l$ and $\bar{R}^{(n)}_l$  the mean-values of $S^{(n)}_l$ and $R^{(n)}_l$ over $B(x_l,2a)$ respectively so that $\bar{u}_{l,n}^* = \bar{S}^{(n)}_l + \bar{R}^{(n)}_l$.

\medskip

By the scaling properties of the Bogovskii operator, we obtain that 
\beq
\int_{\mathcal{F}_N}|\nabla w_{l,n}|^2& \leq & C\sum_{l=1}^N\|\nabla(\chi(\frac{\cdot-x_l}{a})(u^*_{l,n}-\bar{u}^*_{l,n}))\|^2_{L^2(B(x_l,2a))}\\
&\lesssim&\sum_{j=1}^4 H^{(n)}_{l,j},
\enq
where
\beq
H^{(n)}_{l,1}:=\sum_{l=1}^N\|\nabla S^{(n)}_l\|^2_{L^2(B(x_l,2a))},~H^{(n)}_{l,2}:=\sum_{l=1}^N\|\nabla R^{(n)}_l\|^2_{L^2(B(x_l,2a))},
\enq
and
\beq
H^{(n)}_{l,3}:=\dfrac{1}{a^2}\sum_{l=1}^N\| S^{(n)}_l-\bar{S}^{(n)}_l\|^2_{L^2(B(x_l,2a))},~~H^{(n)}_{l,4}:=\dfrac{1}{a^2}\sum_{l=1}^N\| R^{(n)}_l-\bar{R}^{(n)}_l\|^2_{L^2(B(x_l,2a))}.
\enq
Here, it is standard that the Poincar\'e-Wirtinger inequality entails that $H^{(n)}_{l,3}\leq CH^{(n)}_{l,1}$ and $H^{(n)}_{l,4}\leq CH^{(n)}_{l,2}$. Hence, we only need to bound $H^{(n)}_{l,1}$ and $H^{(n)}_{l,2}$.

\medskip

We deal with $H^{(n)}_{l,1}$ first. According to the definition of $S^{(n)}_l$
and $\mathcal K[A_{\lambda}^{(n-1)},\mathcal B_{\lambda}]$ (see Proposition \ref{asym-stokes}), for each $l=1,\dots N$, we have 
\beq
S^{(n)}_l&=&\sum_{l\neq\lambda}a^3\mathcal{K}[A^{(n-1)}_\lambda,\mathcal{B}_\lambda](x-x_\lambda)\\
&=&a^3\sum_{l\neq\lambda}(\mathbb{M}[A^{(n-1)}_\lambda,\mathcal{B}_\lambda]:\nabla \mathcal{U}^1(x),\mathbb{M}[A^{(n-1)}_\lambda,\mathcal{B}_\lambda]:\nabla \mathcal{U}^2(x),\mathbb{M}[A^{(n-1)}_\lambda,\mathcal{B}_\lambda]:\nabla \mathcal{U}^3(x)).
\enq
As in the proof of {Proposition \ref{Al}}, for each $i,j\in\{1,2,3\}$, $U^{ij}$ is homogeneous in $\RR^3\setminus\{0\}$ with degree $-1$ such that 
\beq
\Delta U^{ij}=\p_i q_j~~\mathrm{in}~~\RR^3\setminus\{0\},
\enq
where for each $j\in\{1,2,3\}$, $q_j$ is harmonic in $\RR^3\setminus\{0\}$. By the definition of $A^{(n+1)}_{l}$ and applying Lemma \ref{modified goal} by choosing $V:=U^{ij}$ and $Q:=\p_i q_j$ for each $i,j\in\{1,2,3\}$ and Proposition  \ref{Al}, we have
\[
H^{(n)}_{l,1}\leq [C(2,\varepsilon_0)]^{n-1} \big(\frac{a}{d}\big)^{3n}(a^3N)|A|^2.
\]
Up to restrict the size of $\varepsilon_0$ we obtain that:
\begin{equation} \label{eq_Hl1}
H^{(n)}_{l,1}\leq C(\varepsilon_0) \big(\frac{a}{d}\big)^{8}(a^3N)|A|^2.
\end{equation}

Now we turn to deal with $H^{(n)}_{l,2}$. By the definition of $R^{(n)}_l$, we have that for any $x\in B(x_l,2a)$,
\beq
|\nabla R^{(n)}_l(x)|\leq Ca^4\sum_{j=0}^{n-1}\sum_{l\neq\lambda}|A^{(j)}_\lambda|\frac{1}{|x_l-x_\lambda|^4}.
\enq

We notice here -- since the minimum distance between two $x_l$'s is lager than $d$ which is much larger than $a$ (for small $\varepsilon_0$) -- that, for each $l=1,\dots,N$  and for any $x\in B(x_l,2a)$, there holds:
\beq
\sum_{l\neq\lambda}|A^{(j)}_{\lambda}|\frac{1}{|x_l-x_\lambda|^4}&\leq& Cd^{-3}\sum_{l\neq \lambda}\int_{B(x_\lambda,d/2)}\dfrac{|A^{(j)}_\lambda|}{|x_l-y|^4}dy\\
& \leq&  Cd^{-3}\int_{\RR^3}|\Phi^{(j)}(y)|\frac{\mathbf{1}_{|x_l-y|>d/2}}{|x_l-y|^4}dy\\
&\leq&C(\varepsilon_0)d^{-3}\int_{\RR^3}|\Phi^{(j)}(y)|\frac{\mathbf{1}_{|x-y|>d/2}}{|x-y|^4}dy,
\enq
where 
\beq
\Phi^{(j)}(x):=\sum_{l=1}^NA^{(j)}_l\mathbf{1}_{B(x_l,d/2)}(x).
\enq
Therefore we obtain, with a direct Young inequality for convolution:
\beq
H^{(n)}_{l,2}\leq C\frac{a^8}{d^6}\|\sum_{j=0}^{n-1}|\Phi^{(j)}|*\frac{\mathbf{1}_{|y|>d/2}}{|y|^4}\|^2_{L^2(\RR^3)}\leq C\frac{a^8}{d^8}\Big(\sum_{j=0}^{n-1}\|\Phi^{(j)}\|_{L^2(\mathbb R^3)}\Big)^2,
\enq
which combined with Proposition \ref{Al}, yields that: 
\begin{equation} \label{eq_Hl2}
H^{(n)}_{l,2}\leq \Big(\frac{a}{d}\Big)^{8}\sum_{j=0}^{n-1}\Big( C(2,\varepsilon_0) \left(\frac{a}{d} \right)^{3/2}\Big)^{2j}Na^3|A|^2\leq C(\varepsilon_0)\Big(\frac{a}{d}\Big)^{8} Na^3 |A|^2,
\end{equation}
where we have chosen $\varepsilon_0$ sufficiently small so that the series
$(\sum_{j\geq 0}C(2,\varepsilon_0)(a/d)^{3/2})^{2j}$ converges.
Combining \eqref{eq_Hl1}-\eqref{eq_Hl2}, we obtain the expected result.
\end{proof}

\section{Approximation of the target system} \label{sec_continu}
In this section, we fix $A \in {\rm Sym}_{3,\sigma}(\mathbb R)$ and $\mathbb M_{eff} \in \mathcal M(\varepsilon_0)$ (for some small $\varepsilon_0$) and we 
 analyse the properties of the asymptotic problem
 \begin{align} \label{eq_continu_sec}
  & \left\{  
 \begin{aligned}
 - {\rm div} (2\mu(1+ \mathbb M_{eff})(D(u))  - p \mathbb{I}_3) &=& 0 \\
 {\rm div} u &=& 0 
 \end{aligned}
 \right. && \text{ on $\mathbb R^3$}, \\
  & \quad u(x) = A x && \text{ at infinity}.  \label{bc_continu_sec}
 \end{align}
 We note that $\mu$ is a gain a simple factor in this equation so that we only treat the case $\mu = 1$ below.
This system is associated with the weak formulation:
\begin{center}
\begin{minipage}{.8\textwidth}
Find $v \in \dot{H}^1_{\sigma}(\mathbb R^3)$ such that:
\[
2 \mu \int_{\mathbb R^3} [( 1 + \mathbb{M}_{eff} )(D(v) + A)] : \nabla w = 0\,, 
\quad \forall \, w \in \dot{H}^1_{\sigma}(\mathbb R^3).
\]
\end{minipage}
\end{center}
Since $\mathbb{M}_{eff}$ has compact support, for $\varepsilon_0$ sufficiently small we have that $\|\mathbb{M}_{eff}\|_{L^{\infty}(\mathbb R^3)} \leq 1/2$ so that 
construction of a weak solution falls into the scope of the Lax-Milgram theorem. Hence, under the assumption that $\varepsilon_0$ is sufficiently small we have existence and uniqueness of a $u_c$ satisfying
\begin{itemize}
\item $v_c(x) = (u_c(x) -Ax) \in \dot{H}^1_{\sigma}(\mathbb R^3),$ 
\item there exists $p_c$ for which \eqref{eq_continu_sec} holds true (in $\mathcal D'(\mathbb R^3)$),
\end{itemize}
and consequently, the pressure $p_c$ exists and is unique up to a constant.
We focus now --  as in the previous section for the problem in a perforated domain -- on  a possible expansion of the solution $u_c$ in terms of "powers of $\mathbb{M}_{eff}$". Namely, for small $\varepsilon_0$ the matrix   
$\mathbb{M}_{eff}$ can be seen as a perturbation of the identity so that
one may look for a solution to \eqref{eq_continu_sec}-\eqref{bc_continu_sec}
by iterating the mapping $v \mapsto \tilde{v} := \mathcal Lv$ solving the system:
\begin{align*}
 &\left\{ \begin{aligned}
 - {\rm div} (   D(\tilde{v})  - p \mathbb{I}_3) &=  {\rm div} \, \mathbb{M}_{eff}(D(v)) + {\rm div}  \, \mathbb{M}_{eff}(A) \\
 {\rm div} \tilde{v} &= 0 
 \end{aligned}
 \right.
 &&
 \text{ in $\mathbb R^3,$}\\
  & \quad \tilde{v}(x) = 0 && \text{ at infinity},
 \end{align*}
 starting form $v^{(0)}(x) = 0.$ Again, it is standard by introducing a weak formulation and Lax-Milgram arguments that there exists a unique velocity-field  $\tilde{v}_c$ satisfying  
\begin{itemize}
\item  $\tilde{v}_c \in \dot{H}^1_{\sigma}(\mathbb R^3)$
\item there exists a pressure $\tilde{p}_c$ such that  
\begin{equation*}
\left\{ \begin{aligned}
 - {\rm div} (   D(\tilde{v}_c)  - \tilde{p}_c \mathbb{I}_3) &=  {\rm div} \, \mathbb{M}_{eff}(A) \\
 {\rm div} \tilde{v} &= 0 
 \end{aligned}
 \right.
 \qquad
 \text{ in $\mathbb R^3,$}
 \end{equation*}
\end{itemize} 
The main result of this section is the following proposition which 
compares the velocity-field $v_c(x) = u_c(x) - Ax $ with $\tilde{v}_c.$ 

\begin{proposition}\label{uc-uc}
Under the assumption that $\varepsilon_0>0$ is sufficiently small, there exists a constant $C(K,\varepsilon_0)$ such that
\beqq\label{ucuc}
\|\nabla v_c-\nabla\tilde{v}_c\|_{L^2(\RR^3)}\leq C(K,\varepsilon_0)\|\mathbb{M}_{eff}\|^2_{L^{\infty}(\RR^3)} |A|.
\enqq
\end{proposition}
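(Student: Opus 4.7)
The plan is to argue purely at the level of the weak (energy) formulation. First I would write down the weak problem satisfied by the difference $w := v_c - \tilde v_c \in \dot H^1_\sigma(\mathbb R^3)$. Since $v_c$ obeys
\[
2 \int_{\mathbb R^3} (1 + \mathbb M_{eff})(D(v_c) + A) : D(\varphi) \, dx = 0
\]
for every $\varphi \in \dot H^1_\sigma(\mathbb R^3)$, while $\tilde v_c$ obeys the analogous identity with $D(\tilde v_c) + \mathbb M_{eff}(A)$ in place of the bracketed expression, subtracting gives
\[
2 \int_{\mathbb R^3} \bigl[ D(w) + \mathbb M_{eff}(D(v_c)) \bigr] : D(\varphi) \, dx = 0,\qquad \forall \varphi \in \dot H^1_\sigma(\mathbb R^3).
\]
I would then test this identity against $\varphi = w$ itself, which is legitimate because $w \in \dot H^1_\sigma$, obtaining
\[
2 \|D(w)\|_{L^2(\mathbb R^3)}^2 = -2 \int_{\mathbb R^3} \mathbb M_{eff}(D(v_c)) : D(w) \, dx.
\]

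The key step, which produces the quadratic factor $\|\mathbb M_{eff}\|_{L^\infty}^2$ rather than the naive linear one, is to split $D(v_c) = D(\tilde v_c) + D(w)$ in the right-hand side. The $\mathbb M_{eff}(D(w)):D(w)$ term is pointwise bounded by $\|\mathbb M_{eff}\|_{L^\infty} |D(w)|^2$ and, since $\mathbb M_{eff} \in \mathcal M(\varepsilon_0)$ with $\varepsilon_0$ small, can be absorbed into the left-hand side. For the remaining cross term I would use Cauchy--Schwarz together with the compact support of $\mathbb M_{eff}$ in $K$:
\[
\Bigl| \int_{\mathbb R^3} \mathbb M_{eff}(D(\tilde v_c)) : D(w) \, dx \Bigr| \leq \|\mathbb M_{eff}\|_{L^\infty(\mathbb R^3)} \|D(\tilde v_c)\|_{L^2(\mathbb R^3)} \|D(w)\|_{L^2(\mathbb R^3)}.
\]

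The ingredient I need next is a preliminary bound on $\|D(\tilde v_c)\|_{L^2}$. I would obtain it by testing the defining weak equation for $\tilde v_c$ against $\tilde v_c$ itself, which yields
\[
\|D(\tilde v_c)\|_{L^2(\mathbb R^3)} \leq \|\mathbb M_{eff}(A)\|_{L^2(K)} \leq \|\mathbb M_{eff}\|_{L^\infty(\mathbb R^3)} |A| \, |K|^{1/2}.
\]
Plugging this into the cross-term estimate and combining with the absorbed diagonal term gives, for $\varepsilon_0$ small enough,
\[
\|D(w)\|_{L^2(\mathbb R^3)} \leq C(K,\varepsilon_0) \|\mathbb M_{eff}\|_{L^\infty(\mathbb R^3)}^2 |A|,
\]
after which the conclusion follows by using the identity $\|\nabla w\|_{L^2}^2 = 2\|D(w)\|_{L^2}^2$ valid for divergence-free fields in $\dot H^1(\mathbb R^3)$.

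There is no genuinely hard step here: everything is a Lax--Milgram style energy estimate. The only subtle point, which I would flag, is that the decomposition $D(v_c) = D(\tilde v_c) + D(w)$ must be performed before applying Cauchy--Schwarz; testing directly on $D(v_c)$ would only give a linear power of $\|\mathbb M_{eff}\|_{L^\infty}$ and would miss the quadratic gain that expresses the fact that $\tilde v_c$ is precisely the first term in the Neumann expansion of $v_c$ in powers of $\mathbb M_{eff}$.
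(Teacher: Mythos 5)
Your proof is correct and is essentially the paper's argument in unrolled form: the paper derives the same bound from the contraction estimate $\|\nabla \tilde v_c - \nabla v_c\|_{L^2} \leq \frac{\|\mathcal L\|}{1-\|\mathcal L\|}\,\|\nabla \tilde v_c\|_{L^2}$ for the affine iteration map $\mathcal L$ (whose Lipschitz constant is $\lesssim \|\mathbb M_{eff}\|_{L^\infty}$), and this is precisely your energy identity for $w = v_c - \tilde v_c$ with the $\mathbb M_{eff}(D(w))$ term absorbed and the cross term estimated via $\|D(\tilde v_c)\|_{L^2} \leq |K|^{1/2}\|\mathbb M_{eff}\|_{L^\infty}|A|$. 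Both arguments rest on the same two ingredients, namely the linear-in-$\|\mathbb M_{eff}\|_{L^\infty}$ a priori bound for the first iterate $\tilde v_c$ and the smallness of $\varepsilon_0$ used for absorption, so there is nothing missing.
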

\begin{proof}
This proof is a straightforward application of fixed-point arguments. 
First, let prove that the mapping $\mathcal L$ is a contraction on $\dot{H}^1_{\sigma}(\mathbb R).$  Indeed, for arbitrary $(v_1,v_2) \in \dot{H}^1_{\sigma}(\mathbb R^3),$ given the weak formulation for the Stokes problem, we have that $w = \mathcal L(v_1-v_2)$ satisfies:
\[
\int_{\mathbb R^3} D(w) : D(\varphi) = \int_{\mathbb R^3} [\mathbb{M}_{eff} (D(v_1 -v_2))] : D(\varphi)\,, \qquad \forall \, \varphi \in \dot{H}^1_{\sigma}(\mathbb R^3). 
\]
Setting $w = \varphi$ and recalling that $w$ is divergence-free we obtain that 
\[
\int_{\mathbb R^3} |\nabla w|^2 = 2 \int_{\mathbb R^3} |D(w)|^2 
\leq 8 \|\mathbb{M}_{eff}\|^2_{L^{\infty}} \int_{\mathbb R^2} |D(v_1-v_2)|^2 
\leq 4\|\mathbb{M}_{eff}\|^2_{L^{\infty}} \int_{\mathbb R^2} |\nabla v_1-\nabla v_2|^2. 
\]
Consequently, fix $\varepsilon_0 < 1/8.$ Then $\|\mathcal L\| < 1/\sqrt{2}$ and the mapping $\mathcal L$ is a contraction that admits a unique fixed point. This yields a solution to \eqref{eq_continu_sec}-\eqref{bc_continu_sec}. Furthermore, this solution is obtained by iterating the mapping $\mathcal L$ from  $v^{(0)} = 0.$
So the sequence $v^{(n)} = \mathcal L^{\circ n} v^{(0)}$ converges to $v_c$
(in $\dot{H}^{1}(\mathbb R^3)$) while, by definition, $\tilde{v}_c = v^{(1)}.$ 
Similar energy estimates yield that 
\begin{equation} \label{eq_boundvtilde}
\|\nabla \tilde{v}_c\|_{L^2(\mathbb R^3)}
\leq |K|^{1/2} \|M_{eff}\|_{L^{\infty}(\mathbb R^3)}|A|.
\end{equation}
Standard arguments with contractions then yield that
\begin{align*}
\|\nabla \tilde{v}_c - \nabla v_c  \|_{L^2(\mathbb R^3)} 
& = \|v^{(1)} - \lim v^{(n)}\|_{\dot{H}^1(\mathbb R^3)}  \\
& \leq  \dfrac{\|\mathcal L \|}{1 - \| \mathcal L\|} \|v^{(1)}\|_{\dot{H}^1(\mathbb R^3)} \leq  4|K|^{1/2} \|\mathbb{M}_{eff}\|_{L^{\infty}(\mathbb R^3)}  \|M_{eff}\|_{L^{\infty}(\mathbb R^3)} |A|.
\end{align*}
 This concludes the proof.
\end{proof}

\section{Proof of main result}
\label{sec_mainresult}
We end the paper with a proof of  Theorem \ref{thm_main}. In the whole section,  we assume that we are given a perforated domain such that  \eqref{H1}-\eqref{H2} hold true. We are also given $\mathbb{M}_{eff} \in \mathcal M(\varepsilon_0)$ with simultaneously $(a/d)^3 < \varepsilon_0$
(see \eqref{H1}-\eqref{H2} for the definitions of $a$ and $d$). 
Restrictions on $\varepsilon_0$ are introduced throughout the section. 
Finally, we fix a matrix $A \in {\rm Sym}_{3,\sigma}(\mathbb R).$

\medskip

We recall that Theorem \ref{thm_main} is a stability estimate between the solutions to two problems. The first one is the Stokes problem in a perforated domain \eqref{eq_Stokes}-\eqref{bc_Stokes}-\eqref{eq_Newton}
that we studied in Section \ref{sec_refmet}. We restrict at first $\varepsilon_0$ so that Proposition \ref{prop_perfore}  holds true. We have then a sequence of approximations $(u_{app}^{(n)})_{n\in \mathbb N}$ to the velocity-field $u_N(x) = Ax + v_N(x)$ solution to \eqref{eq_Stokes}-\eqref{bc_Stokes}-\eqref{eq_Newton}. The second problem is the continuous analogue
\eqref{eq_continu}-\eqref{bc_continu} that we studied in Section \ref{sec_continu}. We assume also that $\varepsilon_0$ is sufficiently small so that Proposition \ref{uc-uc} holds true. We have then an approximation
$\tilde{u}_{c}(x) = Ax + \tilde{v}_c(x)$ to the velocity-field $u_c$ solution to 
\eqref{eq_continu}-\eqref{bc_continu}.

The purpose of Theorem \ref{thm_main} is to compute a bound from above for 
$u_N - u_c.$ To this end, we fix $n=3$ and $u_{app} = u^{(3)}_{app}$ (with the notations of Proposition \ref{prop_perfore})  and write 
\begin{equation} \label{eq_errorsplitting}
u_N - u_c =  (u_N - u_{app})  + (u_{app} - \tilde{u}_c) + (\tilde{u}_{c} - u_c ) 
=: R_{perf} +   R_{main} + R_{cont}.
\end{equation}

The two error terms $R_{perf}$ and $R_{cont}$ have been estimated previously
in Proposition \ref{prop_perfore} and Proposition \ref{uc-uc} respectively.  So, we proceed in the next subsection with estimating $R_{main}$
and shall combine the various partial results in the last subsection to complete the proof of our theorem.

\subsection{Computing $R_{main}$}
We recall that $u_{app} = u^{(3)}_{app}$ is constructed via the method of reflections: 
\beq
u_{app}(x)=Ax+\sum_{j=1}^3\Big(\sum_{l=1}^N U[A^{(j-1)}_l,B_l](x-x_l)\Big).
\enq
By the definition of $u_{app}$ and Proposition \ref{asym-stokes}, we have the following decomposition, for any $x\in\mathcal{F}_N$
\beqq
u_{app}(x) - \tilde{u}_c(x)=R_1(x)+R_2(x),
\enqq
where 
\beq
R_1(x):=\sum_{l=1}^N\mathcal{K}[A,B_l](x-x_l) - \tilde{v}_c(x)
\enq
and
\beq
R_2(x):=\sum_{l=1}^N\mathcal{H}[A,{B}_l](x-x_l)+\sum_{j=2,3}\sum_{l=1}^N U[A^{(j-1)}_l,B_l](x-x_l).
\enq
We start with computing $R_1:$
\begin{proposition}\label{Homo-vc}
Let $K_0 \Subset \mathbb R^3$ and $p \in [1,3/2[,$ there exists $C(K_0)
$ for which:
\begin{equation} \label{eq_boundR1}
\|R_1\|_{L^{p}(K_0 \setminus \bigcup B(x_l,4a))}\leq C(K_0)\Big[\|\mathbb M_N -\mathbb M_{eff}\|_{\dot{H}^{-1}(\RR^3)} + \left( \dfrac{a^3}{d^3}\right)^{1+\theta} \Big],
\end{equation}
where $\theta = \frac1p - \frac 23.$
\end{proposition}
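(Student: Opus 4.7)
The plan is to interpose between $\sum_{l=1}^N \mathcal{K}[A,B_l](\cdot-x_l)$ and $\tilde v_c$ an intermediate vector-field $\mathcal G$, defined as the unique $\dot H^1_\sigma(\RR^3)$-Stokes solution with source $\mathrm{div}(\mathbb M_N(A))$, and to split
\[
R_1 \;=\; \underbrace{(\mathcal G - \tilde v_c)}_{R_1^{(a)}} \;+\; \underbrace{\Bigl(\sum_{l=1}^N \mathcal K[A,B_l](\cdot-x_l) - \mathcal G\Bigr)}_{R_1^{(b)}}.
\]
Here $\mathcal G$ is the continuum analogue of the Einstein sum: the Stokes kernel $\mathbf U$ convolved with the piecewise-constant density $\mathbb M_N(A)$ rather than summed against $N$ Dirac-type concentrations. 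Since this formula mirrors exactly the representation of $\tilde v_c$ in terms of $\mathbb M_{eff}(A)$, by linearity $R_1^{(a)}$ itself solves Stokes on $\RR^3$ with source term $\mathrm{div}\,[(\mathbb M_N - \mathbb M_{eff})(A)]$.

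For $R_1^{(a)}$, a standard Fourier computation (equivalently the elliptic duality $\|(-\Delta)^{-1}\mathrm{div}\,F\|_{L^2(\RR^3)} \lesssim \|F\|_{\dot H^{-1}(\RR^3)}$ for the Stokes resolvent) yields
\[
\|R_1^{(a)}\|_{L^2(\RR^3)} \leq C|A|\,\|\mathbb M_N - \mathbb M_{eff}\|_{\dot H^{-1}(\RR^3)}.
\]
H\"older's inequality on the bounded set $K_0$ then converts this into the required $L^p$-bound for every $p\leq 2$, covering the whole range $p\in[1,3/2)$.

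For $R_1^{(b)}$, using $\mathbb M_N(A)(y) = \tfrac{3}{4\pi a^3}\sum_l \mathbb M[A,B_l]\mathbf 1_{B(x_l,a)}(y)$ together with the representation $\mathcal K[A,B_l](z)_i = \mathbb M[A,B_l]:\nabla \mathcal U^i(z)$ from Proposition~\ref{asym-stokes}, one finds
\[
R_1^{(b)}(x)_i = -\sum_{l=1}^N \mathbb M[A,B_l]:\Biggl[\frac{1}{|B(x_l,a)|}\int_{B(x_l,a)}\nabla\mathcal U^i(x-y)\,dy - \nabla\mathcal U^i(x-x_l)\Biggr].
\]
For $x\in K_0\setminus\bigcup_l B(x_l,4a)$, the map $y\mapsto \nabla\mathcal U^i(x-y)$ is smooth on each $B(x_l,a)$. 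A Taylor expansion about $y=x_l$ shows that the first-order term cancels by the odd symmetry of the ball and that the second-order remainder is controlled by $a^2 \sup_{B(x_l,a)}|\Delta\nabla\mathcal U^i(x-\cdot)| \leq Ca^2/|x-x_l|^4$. Combined with $|\mathbb M[A,B_l]|\leq Ca^3|A|$ from Proposition~\ref{asym-stokes} and the scaling \eqref{eq_scalelead}, this gives the pointwise bound
\[
|R_1^{(b)}(x)| \leq Ca^5|A|\sum_{l=1}^N \frac{1}{|x-x_l|^4}.
\]
The $L^p$-norm is then closed by Minkowski's inequality and the elementary computation $\|\mathbf 1_{|y|\geq 4a}/|y|^4\|_{L^p(\RR^3)} = Ca^{3/p-4}$ (valid for any $p\geq 1$): summing over $N\leq C|K|/d^3$ particles produces $\|R_1^{(b)}\|_{L^p(K_0)} \leq CNa^{1+3/p} \leq C(K)(a/d)^{1+3/p} = C(K)(a^3/d^3)^{1+\theta}$, after absorbing the uniformly bounded factor $d^{3/p-2}$ (bounded since $3/p-2 \in [0,1]$ for $p\in[1,3/2]$ and $d\leq\mathrm{diam}(K)$) into the constant.

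The main technical point is the first-order cancellation in the quadrature: without it the per-particle contribution would be only $O(a/|x-x_l|^3)$ and the total would be one full power of $a/d$ too large to match the desired $(a^3/d^3)^{1+\theta}$. The cancellation is not accidental—it is exactly why $\mathbb M_N$ is defined by uniformly spreading each weight $\mathbb M[A,B_l]$ over the centered ball $B(x_l,a)$ rather than concentrating it as a Dirac mass at $x_l$. Once this observation is in place, the rest of the argument is a direct application of Fourier-side elliptic estimates and a straightforward integral computation.
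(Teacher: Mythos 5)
Your proof is correct and follows the same overall architecture as the paper: you interpose the convolution of the Stokes kernel with $\mathbb M_N(A)$ between the discrete sum and $\tilde v_c$, estimate the $\dot H^{-1}$ piece by an $L^2$ Calder\'on--Zygmund/Fourier bound followed by H\"older on $K_0$, and reduce the remaining piece to a quadrature error between the point value $\nabla\mathcal U^i(x-x_l)$ and its average over $B(x_l,a)$. Where you diverge is in how that quadrature error is controlled: the paper invokes its exact generalized mean-value formula (Lemma \ref{mean-value}), uses the harmonicity of $q_i$ to write the correction explicitly as $-\tfrac{3a^2}{40\pi}\int \mathbb M_N(A)(y)\,\mathbf 1_{|x-y|>3a}\nabla^2 q_i(x-y)\,dy$, and closes with a Young convolution inequality against $\|\mathbb M_N(A)\|_{L^p}$; you instead Taylor-expand, use the odd symmetry of the centered ball to kill the first-order term, bound the remainder pointwise by $Ca^2/|x-x_l|^4$, and sum with Minkowski. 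Both routes hinge on exactly the cancellation you identify (without it one loses a power of $a/d$), and both land on the same rate $(a^3/d^3)^{1+\theta}$; your version is more elementary and self-contained, while the paper's version recycles a lemma it needs anyway for the method of reflections. One small imprecision: the second-order Taylor remainder is controlled by the full Hessian $\nabla^2\bigl(\nabla\mathcal U^i\bigr)=\nabla^3\mathcal U^i$, not merely its trace $\Delta\nabla\mathcal U^i$; since $\nabla^3\mathcal U^i$ is homogeneous of degree $-4$ the bound $Ca^2/|x-x_l|^4$ is unaffected, so this is cosmetic.
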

\begin{proof}
By Proposition \ref{asym-stokes}, we know that each component of $\mathcal{K}$ can be written as:
\beq
\mathcal{K}[A,B_l]_i(x-x_l)&=&\mathbb{M}[A,B_l]:\nabla\mathcal{U}^i(x-x_l)
\enq
In this identity, we recall that $\mathbb M[A,B_l] = a^3 \mathbb M[A,\mathcal B_l],$ 
given by \eqref{eq_defM}, and that $\mathcal{U}^i=(U^{i1},U^{i2},U^{i3})$  with
\beq
U^{ij}:=-\frac{1}{8\pi}\big[\frac{\delta_{ij}}{|x-y|}+\frac{(x_i-y_i)(x_j-y_j)}{|x-y|^3}\big]
\enq
corresponding to the fundamental solution of Stokes equation in $\RR^3$. 
According to the fact that for any $x\in\RR^3\setminus\{0\}$
\[
\Delta\mathcal{U}^i(x)=\nabla q_i
\]
with $q_i(z)=\frac{1}{4\pi}\frac{z}{|z|^3}$ and applying Lemma \ref{mean-value}, we obtain that for any $x \in K_0 \setminus \bigcup B(x_{\lambda},a)$ and $i=1,2,3$, for any $l=1,\ldots,N:$
\begin{multline*}
\mathcal{K}[A,{B}_l]_i(x-x_l)= \frac{3}{4\pi} \mathbb{M}[A,\mathcal{B}_l]:  \int_{B(x_l,a)}\nabla \mathcal{U}^i(x-y)dy \\
+ \dfrac{a^3}{3} \mathbb{M}[A,\mathcal{B}_l]: \int_0^a\left(\frac{r^4}{a^3}-r\right)\fint_{B(x_l,r)}\nabla^2 q_i(x-y)dydr
\end{multline*}
We proceed by remarking that:
\beqq
\sum_{l=1}^N \dfrac{3}{4\pi}\mathbb{M}[A,\mathcal{B}_l]: \int_{B(x_l,a)}\nabla \mathcal{U}^i(x-y)dy=\int_{\RR^3} \mathbb M_N(A)(y) :\nabla\mathcal{U}^i(x-y)dy
\enqq
Furthermore, since $q_i$ is harmonic on $\mathbb R^3 \setminus \{0\},$
for $x \in K_0 \setminus \bigcup B(x_{\lambda},a)$ and $l\in \{1,\ldots,N\}$ there holds:
\beqq
\begin{split}
\int_0^a\left(\frac{r^4}{a^3}-r\right)\fint_{B(x_l,r)}\nabla^2 q_i(x-y)dydr&=\int_0^a\left(\frac{r^4}{a^3}-r\right)\fint_{B(x_l,a)}\nabla^2 q_i(x-y)dydr\\
&= - \frac{9}{40\pi a}\int_{B(x_l,a)}\nabla^2 q_i(x-y)dy.
\end{split}
\enqq

On the other hand, by uniqueness of the solution to the Stokes problem 
defining $\tilde{v}_c$ (in $\dot{H}^1(\mathbb R^3)$), we know that 
$\tilde{v}_c$ is computed  with Green's function for the Stokes problem.
This yield componentwise:
\[
\tilde{v}_{c,i}(x) = \int_{\mathbb R^3} \mathbb{M}_{eff}(A)(y) :  \nabla \mathcal U^{i}(x-y){\rm d}y, \quad \forall \, x \in \mathbb R^3.
\]
We note that this quantity is well-defined since $\mathbb M_{eff} \in L^{\infty}(\mathbb R^3)$ has compact support and $\nabla \mathcal U^i$
is homogeneous of degree $-2$ so that it is $L^p_{loc}(\mathbb R^3)$
for arbitrary $p < 3/2.$
Eventually, the $i-$th component of  $R_1$ can be rewritten as
\begin{multline*}
R_{1,i}(x)= \int_{\RR^3} \left[ \mathbb M_N - \mathbb M_{eff} \right](A)(y):  \nabla\mathcal{U}^i(x-y)dy \\
 - \frac{3a^2}{40\pi}\int_{\mathbb R^3} \mathbb M_N(A)(y) : \mathbf{1}_{|x-y| > 3a}\nabla^2 q_i(x-y)dy,
\end{multline*}
since $x \notin \bigcup B(x_l,4a).$
Concerning the first term on the right-hand side of this equality,
let denote $h$ any component of $\left[\mathbb M_N - \mathbb M_{eff} \right].$
By assumption, we have then $h\in \dot{H}^{-1}(\RR^3)$ so that it can be written $h=\partial_1 \varphi_1+\partial_2 \varphi_2$, where  $\varphi_1$ and $\varphi$ are  $L^2(\RR^3)$ and 
\[
\max_{i=1,2}\|\varphi_i\|_{L^2(\RR^3)}\leq \|h\|_{\dot{H}^{-1}(\RR^3)} \leq 
\|\mathbb M_N - \mathbb M_{eff} \|_{\dot{H}^{-1}(\RR^3)}.
\]
Therefore, we have on $\mathbb R^3:$
\[
 \int_{\RR^3} h (y) \partial_{l}{U}^{ik}(x-y)dy = \int_{\RR^3} \varphi_1(y)   \partial_{1l}{U}^{ik}(x-y)dy + \int_{\RR^3} \varphi_2(y)  \partial_{2l}{U}^{ik}(x-y)dy,
\]
where, by Calder\'on Zygmund inequality, the right-hand side of this identity is well defined and satisfies.
\begin{equation} \label{eq_R11}
\|\int_{\RR^3} h  \partial_{l}{U}^{ik}(x-y)dy \|_{L^2(\RR^3)}
\leq 
C\|\mathbb M_N - \mathbb M_{eff} \|_{\dot{H}^{-1}(\RR^3)}.
\end{equation}

As for the second term in $R_{1,i}$ we can apply a classical Young inequality for convolution to obtain:
\begin{align*}
\|\int_{\RR^3} \mathbb{M}_N(A)(y)\nabla^2 q_i(x-y)dy\|_{L^{p}(\mathbb R^3)}
& \leq C\|\mathbb{M}_N(A)\|_{L^{p}(\RR^3)}\|\dfrac{\mathbf{1}_{|z|>3a}}{|z|^4}\|_{L^1(\mathbb R^3)} \\
& \leq C|A|\Big(\frac{a}{d}\Big)^{\frac 3p} \dfrac{1}{a}.
\end{align*}
Finally, applying the embedding $L^{2}(K_0) \subset L^{p}(K_0),$ we have:
\[
\|R_1\|_{L^p(K_0 \subset \bigcup B(x_l,4a))} \leq C|A| \left[ \|\mathbb M_N(y) - \mathbb M_{eff}(y) \|_{\dot{H}^{-1}(\RR^3)} + a \Big(\frac{a}{d}\Big)^{\frac 3p}. \right]
\]
We conclude by applying again that $Nd^3 \leq |K|$ so that 
\[
a \left( \dfrac{a}{d}\right)^{\frac 3p} \leq (N a^3)^{\frac 13} \left( \dfrac{a}{d}\right)^{\frac 3p} 
\leq \left[\left( \dfrac{a}{d}\right)^3\right]^{\frac 1p+\frac 13}.
\]
\end{proof}

We proceed by computing error estimates for $R_2.$
This is the content of the following proposition: 
\begin{proposition} 
Let $K_0 \Subset \mathbb R^3$ and $p \in [1,3/2[,$ there exists $C(K,K_0)
$ for which:
\begin{equation} \label{eq_boundR2}
\|R_2\|_{L^p(K_0 \setminus \bigcup B(x_l,4a))} \leq C(K,K_0) \left( \dfrac{a^3}{d^3} \right)^{1+\theta} ,
\end{equation}
where $\theta = \frac 1p - \frac 23.$  
\end{proposition}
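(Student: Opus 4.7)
The plan is to decompose $R_2$ into its natural building blocks and bound each piece via the pointwise decay from Proposition~\ref{asym-stokes}, the improved scaling \eqref{eq_scaleremainder} for the remainder $\mathcal{H}$, and the rapid decay of the iterates from Proposition~\ref{Al}. Explicitly,
\[
R_2(x)=\sum_{l=1}^N \mathcal{H}[A,B_l](x-x_l)+\sum_{j=2,3}\sum_{l=1}^N U[A^{(j-1)}_l,B_l](x-x_l),
\]
and for the second sum I would further split each summand via Proposition~\ref{asym-stokes} as $U[A^{(j-1)}_l,B_l]=\mathcal{K}[A^{(j-1)}_l,B_l]+\mathcal{H}[A^{(j-1)}_l,B_l]$. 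This produces three types of contributions: the leading $\mathcal{H}$ term of amplitude $|A|a^4/|x-x_l|^4$, higher-iteration $\mathcal{H}$ terms of amplitude $|A^{(j-1)}_l|a^4/|x-x_l|^4$, and higher-iteration $\mathcal{K}$ terms of amplitude $|A^{(j-1)}_l|a^3/|x-x_l|^2$. The essential point is that either the kernel is more decaying than the $a^3/|\cdot|^2$ kernel of $R_1$ (which produced an exponent $\theta$ close to the target via Proposition~\ref{Homo-vc}), or the driving matrices are already small by Proposition~\ref{Al}.

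For each type I would follow the convolution strategy of the proof of Proposition~\ref{prop_perfore}. Setting $\Phi^{(j-1)}(y):=\sum_l |A^{(j-1)}_l|\mathbf{1}_{B(x_l,d/2)}(y)$, the separation assumption \eqref{H2} yields
\[
\sum_l \frac{|A^{(j-1)}_l|\mathbf{1}_{|x-x_l|>4a}}{|x-x_l|^k}\lesssim d^{-3}\int_{\RR^3} \Phi^{(j-1)}(y)\frac{\mathbf{1}_{|x-y|>d/2}}{|x-y|^k}dy+E_k(x),
\]
for $k\in\{2,4\}$, where $E_k(x)$ accounts for the near-field contributions on the shells $B(x_l,d/2)\setminus B(x_l,4a)$. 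When $k=4$ the truncated kernel sits in $L^1(\RR^3)$ with norm $\sim 1/d$, so Young's inequality directly bounds the convolution by $\|\Phi^{(j-1)}\|_{L^p}/d$; when $k=2$, I would pick a pair $(r,s)$ with $s>3/2$ so that the truncated $|z|^{-2}$ kernel lives in $L^s$ with norm $\sim d^{3/s-2}$, and control $\|\Phi^{(j-1)}\|_{L^r}\lesssim d^{3/r}(a/d)^{(3-3/r)(j-1)}N^{1/r}|A|$ via Proposition~\ref{Al}. For the near-field term $E_k$, direct integration on the shell (using $kp>3$, valid for $p\in[1,3/2)$) gives a contribution of order $(Na^3)^{1/p}\max_l |A^{(j-1)}_l|$, which via Proposition~\ref{Al} with $q$ large is controlled by a power of $(a/d)$ that is strictly improved for $j\geq 2$.

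Assembling the pieces, the leading $\mathcal{H}$ contribution ($j=1$, $k=4$) produces the dominant error $\lesssim (a/d)^4|K_0|^{1/p}+|K|^{1/p}(a^3/d^3)^{1/p}$, which one must compare with the target $(a^3/d^3)^{1+\theta}=(a/d)^{1+3/p}$. The higher-iteration contributions are strictly smaller thanks to the $(a/d)^{3-3/q}$ gain per iteration from Proposition~\ref{Al} and thus absorb into the leading estimate. The main obstacle is precisely the delicate exponent bookkeeping: one must coordinate the choice of $q$ in Proposition~\ref{Al} with Young's (or Hardy--Littlewood--Sobolev) exponents $(r,s)$ so that the kernel integrability condition $s>3/2$ is met while the combined powers of $a/d$ and the volume factors $Nd^3\leq |K|$ close exactly to $(a^3/d^3)^{1+\theta}$. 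Once this balance is achieved, all geometric constants are absorbed into $C(K,K_0)$ and \eqref{eq_boundR2} follows.
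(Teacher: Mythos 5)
Your decomposition of $R_2$ into $\mathcal K$-parts and $\mathcal H$-parts via Proposition \ref{asym-stokes} is exactly the paper's starting point, but the execution has concrete problems. First, the decay rate you assign to the remainder is wrong: by \eqref{eq_scaleremainder} with $\beta=0$ one has $|\mathcal H[A,B_l](x-x_l)|\leq C|A|a^4/|x-x_l|^{3}$, not $a^4/|x-x_l|^{4}$ (the $|x|^{-4}$ rate is the bound on $\nabla\mathcal H$). This is not harmless: with your $|x|^{-4}$ kernel the near-field contribution on the shells $4a<|x-x_l|<d/2$ is of size $(Na^3)^{1/p}|A|\lesssim |K|^{1/p}(a^3/d^3)^{1/p}|A|$ --- which you yourself record as part of the ``dominant error'' --- and since $a^3/d^3<1$ and $1+\theta=\tfrac1p+\tfrac13>\tfrac1p$, this is \emph{strictly larger} than the target $(a^3/d^3)^{1+\theta}$. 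So as written your estimate does not close, and you defer precisely this comparison to ``delicate exponent bookkeeping'' without performing it. (With the correct $|x|^{-3}$ decay the shell contribution gains a factor $a$, becoming $(Na^3)^{1/p}a|A|\leq C(K)(a^3/d^3)^{1+\theta}|A|$, and the gap disappears.) Second, your far-field Young step for the $k=2$ kernel is infeasible as stated: membership of the truncated kernel in $L^s(\RR^3)$ forces $s>3/2$, and then Young's relation $1+\tfrac1p=\tfrac1r+\tfrac1s$ with $p<3/2$ gives $\tfrac1r>\tfrac13+\tfrac1p>1$, i.e.\ $r<1$. You would have to exploit the compact supports (take $r=1$, land in some $L^{p_Y}$ with $p_Y>3/2$, then apply H\"older on $K_0$) to rescue it; likewise your claim that $kp>3$ holds for $k=2$ and $p<3/2$ is false, so the $k=2$ shell integral is governed by the outer radius $d/2$, not by $a$.

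More fundamentally, the convolution/Calder\'on--Zygmund apparatus is unnecessary here and is not what the paper uses. Since $p<3/2$, each kernel $|x-x_l|^{-2}$ and $|x-x_l|^{-3}$ restricted to $\{|x-x_l|>4a\}\cap K_1$ (with $K_1=K_0\cup K$) lies individually in $L^p$, so a plain triangle inequality over $l$ reduces \eqref{eq_boundR2} to bounding $a^3\sum_{l}\bigl(|A_l^{(1)}|+|A_l^{(2)}|\bigr)$ together with $Na^4|A|\,a^{3/p-3}$; the former is controlled by Proposition \ref{Al} combined with H\"older, namely $\sum_l|A_l^{(j)}|\leq N^{1-1/q}\bigl(\sum_l|A_l^{(j)}|^q\bigr)^{1/q}\lesssim N(a/d)^{j(3-3/q)}|A|$, and by $Nd^3\leq C|K|$, choosing $q$ so that $6-3/q\geq 1+3/p$. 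The heavy machinery of Lemma \ref{modified goal} is needed where one must control $\ell^q$-sums of the iterates uniformly in $N$ (Proposition \ref{Al} and the $\dot H^1$ estimate of Proposition \ref{prop_perfore}); for an $L^p(K_0)$ bound of $R_2$ with $p<3/2$ it buys nothing and, as seen above, introduces exponent constraints that do not fit.
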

\begin{proof}
We recall that 
\beq
R_2(x)&=&\sum_{l=1}^N\mathcal{H}[A,{B}_l](x-x_l)+\sum_{j=2,3}\sum_{l=1}^N U[A^{(j-1)}_l,{B}_l](x-x_l)\\
&=&\sum_{j=2,3}\sum_{l=1}^N\mathcal{K}[A^{(j-1)}_l,{B}_l](x-x_l)+\sum_{j=1}^3\sum_{l=1}^N\mathcal{H}[A^{(j-1)}_l,{B}_l](x-x_l).
\enq
Since $B_l = a \mathcal B_l,$ by Proposition  \ref{asym-stokes}, we know that for any $j=1,2,3$ and $l=1,\dots,N$, when $|x-x_l| > 4a:$
\beq
|\mathcal{K}[A^{(j-1)}_l,{B}_l](x-x_l)|\leq C \frac{a^3|A^{(j)}_l|}{|x-x_l|^2},
\qquad
|\mathcal{H}[A^{(j-1)}_l,{B}_l](x-x_l)|\leq C \frac{ a^4|A^{(j)}_l|}{|x-x_l|^3}.
\enq
Therefore, on $K_0 \setminus \bigcup B(x_l,4a)$, we have:
\beq
|R_2(x)|\leq C \Big[a^3\sum_{l=1}^N\frac{|A^{(1)}_l|+|A^{(2)}_l|}{|x-x_l|^2}+a^4 \sum_{l=1}^N\frac{|A|}{|x-x_l|^3}\Big].
\enq
Consequently, denoting ${K_1} = K_0 \cup K,$ we obtain 
\begin{align*}
\|R_2\|_{L^p(K_0)}&  \leq C a^3 \sum_{i=1}^2\sum_{l=1}^{N} |A_l^{(i)}|\left( \int_{a}^{diam(K_1)} \dfrac{dx}{|x|^{2p}}   \right)^{\frac 1p}+ 
Na^4 |A| \left( \int_{a}^{\infty} \dfrac{dx}{|x|^{3p}} \right)^{\frac 1p} \\
& \leq C(K_1) \left( a^{1 + \frac 3p}\sum_{i=1}^{2} \sum_{l=1}^{N} |A_l^{(i)}| + 
Na^{1+\frac 3p} |A| \right) \\
& \leq C(K,K_1) \left(\dfrac{a^3}{d^3}\right)^{1+\theta}|A|
\end{align*}
where we applied Proposition \ref{prop_perfore} to pass from the second to the last line together with the remark that $Nd^{3} \leq |K|.$
\end{proof}

\subsection{End of the proof}

Let $K_0 \Subset \mathbb R^3$ containing $K$ (for simplicity) and 
$p\in[1,3/2[$.  By \eqref{eq_errorsplitting} we have
\[
\|u_N - u_c\|_{L^p(K_0)} \leq \|R_{perf}\|_{L^p(K_0)} + \|R_{main}\|_{L^p(K_0)}
 + \|R_{cont}\|_{L^p(K_0)}.
\]  
Since $p\leq 6$ and by the embedding $\dot{H}^1(\mathbb R^3) \subset L^{p}_{loc}(\mathbb R^3)$
and Proposition \ref{prop_perfore}, we have the bounds
\begin{align*}
 \|R_{perf}\|_{L^p(K_0)}& \leq C(K_0) \|R_{perf}\|_{L^6(\mathbb R^3)} \leq C(K_0)
 \|u_N - u_{app}\|_{\dot{H}^1(\mathbb R^3)}\\
& \leq C(\varepsilon_0,K_0)|A| \left(  \dfrac{a^3}{d^3} \right)^{\frac{11}{6}}.
\end{align*}
With a similar chain of inequalities, we obtain by applying Proposition \ref{uc-uc}
\begin{align*}
\|R_{cont}\|_{L^p(K_0)} \leq C(K,K_0,\varepsilon_0)|A| \|\mathbb{M}_{eff}\|^2_{L^{\infty}(\mathbb R^2)}.
\end{align*}
Finally, concerning $R_{main}$, we recall that we have simultaneously
$R_{main} = u_{app} - \tilde{u}_c = v_{app} - \tilde{v}_c$ (where we denote $v_{app}(x) = u_{app}(x) - Ax$) and $R_{main} = R_1 +R_2,$ 
with the notations of the previous subsection. This entails that:
\[
\|R_{main}\|_{L^p(K_0)} \leq 
 \|u_{app} - \tilde{u}_c\|_{L^p( \bigcup B(x_l,4a))}
+ \|R_1\|_{L^p(K_0\setminus \bigcup B(x_l,4a))} + \|R_{2}\|_{L^p(K_0\setminus \bigcup B(x_l,4a))}.
\]
The two last terms of the right-hand side are controlled respectively by 
\eqref{eq_boundR1} and \eqref{eq_boundR2}: 
\begin{multline} \label{eq_boundglobal}
\|R_1\|_{L^p(K_0\setminus \bigcup B(x_l,4a))} + \|R_{2}\|_{L^p(K_0\setminus \bigcup B(x_l,4a))}
\\ \leq  C(K,K_0)|A| \left[ \|\mathbb M_N -\mathbb M_{eff}\|_{\dot{H}^{-1}(\RR^3)} + \left( \dfrac{a^3}{d^3}\right)^{1+\theta}  \right]
\end{multline}
where $\theta = \frac 1p-\frac 23.$
As for the first term, we  first bound
\[
\|u_{app} - \tilde{u}_c\|_{L^p(\cup B(x_l,4a))} \leq |\bigcup B(x_l,4a)|^{\frac 1p - \frac 16}( \|v_{app}\|_{L^6(\mathbb R^3)} +  \|\tilde{v}_c\|_{L^6(\mathbb R^3)}). 
\]
Here, it is straightforward from \eqref{eq_boundvtilde} that:
\[
\|\tilde{v}_c\|_{L^6(\mathbb R^3)} \leq C(K)|A| \|\mathbb M_{eff}\|_{L^{\infty}(\mathbb R^3)}.
\]
As for $u_{app},$ we have, by Proposition \ref{prop_perfore} and 
uniform estimate \eqref{eq_unifuN} that:
\begin{align*}
\|v_{app}\|_{L^6(\mathbb R^3)}&  \leq C \|\nabla v_{app}\|_{L^6(\mathbb R^3)} \leq  C \left( \|\nabla v_{app} - \nabla v_N\|_{L^2(\mathbb R^3)} + \|\nabla v_N\|_{L^2(\mathbb R^3)}\right)\\
& \leq C|A| \left(\dfrac{a^3}{d^3} \right)^{\frac 12}.
\end{align*}
Via a straightforward bound on the volume of the $B(x_l,4a)$ we conclude that:
\begin{equation} \label{eq_boundlocal}
\|u_{app} - \tilde{u}_c\|_{L^p(\bigcup B(x_l,4a))}  \leq 
\left( \dfrac{a^3}{d^3}\right)^{\frac 1p - \frac 16}\left(\left(\dfrac{a^3}{d^3}\right)^{1/2} + \|\mathbb{M}_{eff}\|_{L^{\infty}(\mathbb R^3)} \right) |A|.
\end{equation}
Combining \eqref{eq_boundglobal} and \eqref{eq_boundlocal} yields
\begin{multline} 
\|R_{main}\|_{L^p(K_0)} \\
\leq  C(K,K_0,\varepsilon_0)|A| \left[ \|\mathbb M_N -\mathbb M_{eff}\|_{\dot{H}^{-1}(\RR^3)} + \left( \dfrac{a^3}{d^3}\right)^{1+\theta}   + \|\mathbb{M}_{eff}\|^2_{L^{\infty}(\mathbb R^3)} \right],
\end{multline}
since, as $p<3/2,$ we have $2/p-1/3 > 1 + \theta = 1/p+1/3.$

\medskip

Finally, we have proven:
\begin{multline*}
\|u_N - u_c\|_{L^p(K_0)}\\
 \leq C(K,K_0,\varepsilon_0) |A|\left[ \|\mathbb M_N[A] -\mathbb M_{eff}[A]\|_{\dot{H}^{-1}(\RR^3)} + \left( \dfrac{a^3}{d^3}\right)^{1+\theta}   + \|\mathbb{M}_{eff}\|^2_{L^{\infty}(\mathbb R^3)} \right].
\end{multline*}
This concludes the proof.

\bigskip

\paragraph{\bf Acknowledgement.}
The authors would like to thank David G\'erard-Varet  for many fruitful discussions. The authors are partially supported by ANR Project IFSMACS
ANR-15-CE40-0010. The first author is also supported by ANR Project
SingFlow ANR-18-CE40-0027 and Labex Numev Convention grants ANR-10-LABX-20.

\appendix

\section{Tools for the method of reflections} \label{app_ref}

In this appendix, we give some technical tools that are involved in the method of reflections.  We start with a representation formula generalizing the mean-value formula for harmonic functions.

\begin{lemma}\label{mean-value}
Suppose that $f\in L^1_{loc}(\RR^3)$. Let $D$ be a domain in $\RR^3$ and $\La u=f$ in $D$. Then for arbitrary $x \in D$ and $r >0$ such that $B(x,r) \subset D $ we have:
\beq
u(x)=\fint_{B(x,r)}u(y)dy+ \dfrac{1}{3} \int_0^r\left(\frac{\rho^4}{r^3} - \rho\right)\fint_{B(x,\rho)}f(y)dyd\rho.
\enq
\end{lemma}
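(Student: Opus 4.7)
The plan is to reduce the identity to the classical derivation of the mean-value property for harmonic functions, adapted to absorb the source term $f$. I introduce the spherical average $\psi(\rho):=\fint_{\partial B(x,\rho)} u\,d\sigma$ for $\rho\in(0,r]$; by continuity, $\psi(\rho)\to u(x)$ as $\rho\to 0^{+}$. Under the provisional assumption that $u\in C^{2}$ and $f$ is continuous (the general case being handled at the end by mollification), differentiating $\psi$ under the integral sign and applying the divergence theorem together with $\Delta u = f$ yields
\[
\psi'(\rho) \;=\; \fint_{\partial B(x,\rho)}\! \nabla u\cdot n\,d\sigma \;=\; \frac{1}{4\pi\rho^{2}}\int_{B(x,\rho)} f(y)\,dy.
\]
Integrating between $0$ and $\rho$ then gives
\[
\psi(\rho) \;=\; u(x) \;+\; \int_{0}^{\rho} \frac{1}{4\pi s^{2}} \int_{B(x,s)} f(y)\,dy\, ds.
\]

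The next step reconstructs the solid ball average from the spherical means via polar coordinates: $\int_{B(x,r)} u\,dy = \int_{0}^{r} 4\pi\rho^{2}\psi(\rho)\,d\rho$, hence $\fint_{B(x,r)} u\,dy = (3/r^{3})\int_{0}^{r}\rho^{2}\psi(\rho)\,d\rho$. Substituting the expression for $\psi$, the constant term contributes exactly $u(x)$, and the remaining double integral is rearranged by Fubini over the triangle $\{0<s<\rho<r\}$. The inner integration in $\rho$ produces $\int_{s}^{r}\rho^{2}\,d\rho=(r^{3}-s^{3})/3$, and rewriting $\int_{B(x,s)} f\,dy=(4\pi s^{3}/3)\fint_{B(x,s)} f\,dy$ collapses the prefactors to $\tfrac{1}{3}s(1-s^{3}/r^{3})=\tfrac{1}{3}(s-s^{4}/r^{3})$. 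Moving this term to the left-hand side of the identity produces the announced kernel $\tfrac{1}{3}(\rho^{4}/r^{3}-\rho)$ (relabelling $s$ as $\rho$), which completes the smooth case.

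To pass from the smooth case to the stated $L^{1}_{loc}$ hypothesis, I would mollify: let $\eta_{\epsilon}$ be a standard radial mollifier and set $u_{\epsilon}:=u*\eta_{\epsilon}$, which satisfies $\Delta u_{\epsilon}=f*\eta_{\epsilon}$ on any subdomain compactly contained in $D$ once $\epsilon$ is sufficiently small. Applying the identity just established to the pair $(u_{\epsilon},f*\eta_{\epsilon})$ and letting $\epsilon\to 0$ concludes the proof, since both sides of the formula depend continuously on $(u,f)$ in $L^{1}_{loc}(D)$ (the kernel $\rho^{4}/r^{3}-\rho$ being bounded on $(0,r)$). I do not anticipate any genuine obstacle: this is a routine quantitative extension of the mean-value theorem for harmonic functions, and each of the manipulations above is entirely explicit.
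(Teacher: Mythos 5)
Your proposal is correct and follows essentially the same route as the paper: both introduce the spherical average $\psi(\rho)=\fint_{\partial B(x,\rho)}u\,d\sigma$, compute $\psi'$ via the divergence theorem and $\Delta u=f$, integrate from $0$, and then recover the solid-ball average by integrating $\rho^2\psi(\rho)$ (the paper rearranges by integration by parts where you use Fubini, which is the same computation). Your explicit mollification step to reach the $L^1_{loc}$ hypothesis is a detail the paper omits, but it changes nothing essential.
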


\begin{proof}
This lemma must be part of the folklore. We give a proof for completeness.
Let
\[
\phi(r) = \fint_{\partial B(x,r)} u(y)d\sigma(y) = \dfrac{1}{4\pi}\int_{\partial B(0,1)} u(x+z) d\sigma(z).
\]
After differentiation and integration by parts, we obtain:
\[
\phi'(r) = \dfrac{1}{4\pi r^2}\int_{\partial B(x,r)} \partial_n u(y) d\sigma(y)
 = \dfrac{r}{3} \fint_{B(x,r)} f(y) d y.
\]
Since $\phi(r) \to 0$ when $r \to 0$ we infer that:
\[
\phi(r) = u(x) + \int_0^{r} \dfrac{\rho}{3} \fint_{B(x,\rho)} f(y) dy d\rho, \quad 
\text{ if $B(x,r) \subset D.$}
\]

\medskip

Then, if $B(x,r) \subset D$ we have:
\[
\fint_{B(x,r)} u(y) dy= \dfrac{1}{r^3} \int_{0}^r 3\rho^2 \phi(\rho) {d}\rho. 
\]
Integrating by parts and applying the formula we derived above for $\phi(r)$
and $\phi'(r)$, we obtain: 
\begin{align*}
\fint_{B(x,y)} u(y) dy &= \dfrac{1}{r^3} \left[ r^3 \phi(r) - \int_0^{r} \rho^3 \phi'(\rho) d\rho \right]\\
			& = \left(u(x) +  \int_0^{r} \dfrac{\rho}{3} \fint_{B(x,\rho)} f(y) dy d\rho \right) - \dfrac{1}{r^3} \int_0^r \dfrac{\rho^4}{3} \fint_{B(x,\rho)} f(y) dy d\rho.
\end{align*}
This concludes the proof.
\end{proof}

Relying on this formula, we analyze the behavior of the recursive formula for
the method of reflections \eqref{eq_iter_matrix}. We recall that we consider here a set of centers of mass $(x_1,\ldots,x_N)$ and parameters $a,\varepsilon_0$ such that 
$a^3/d^3 < \varepsilon_0$ where $d= \min_{i\neq j} |x_i-x_j|.$ We include
the recursive formula in the following more general framework. We assume we
are given $V \in C^{\infty}(\mathbb R^3 \setminus \{0\})$ homogeneous of degree $-1$ and we suppose that $\Delta V = Q$  where $Q$ is harmonic in $\RR^3\setminus\{0\}$ and homogeneous with degree $-3$. We look then 
at quantities of the form
\begin{equation} \label{eq_Walpha}
W_{l,\alpha}:=a^3\sum_{l\neq\lambda}m_{\lambda}\p^{\alpha} V(x_l-x_\lambda)
\quad 
\forall \, l =1,\ldots,N.
\end{equation}
where $(m_1,\ldots,m_N)$ are given and arbitrary and $\alpha$ is a multi-index in $\mathbb N^3.$  The crucial result underlying the method of reflections is the following lemma:
\begin{lemma}\label{modified goal}
Let $\varepsilon_0$ small and $1<q<\infty.$  Then,
there exists a constant $C(q,\varepsilon_0)$ such that  the following properties hold true:
\begin{enumerate}
\item if $|\alpha|=2$,  then
\beq
\Big(\sum_{l=1}^N|W_{l,\alpha}|^q\Big)^{1/q}\leq C(q,\varepsilon_0\Big)\big(\frac{a}{d}\big)^{3-3/q}\Big(\sum_{l=1}^N|m_l|^q\Big)^{1/q},
\enq

\item if $|\alpha|=1$ and we denote $F_{l}(x):=\sum_{l\neq\lambda}m_{\lambda}\nabla V(x-x_\lambda)$ on $B(x_l,2a)$, there holds:
\beqq\label{Fl-H^1}
\sum_{l=1}^N\|\nabla F_{l}\|^2_{L^2(B(x_l,2a))}\leq \dfrac{C(2,\varepsilon_0)}{d^{3}}\sum_{l=1}^N|m_l|^2.
\enqq

\end{enumerate}
\end{lemma}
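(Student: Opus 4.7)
The plan is to view $\partial^\alpha V$ with $|\alpha|=2$ as a Calder\'on--Zygmund kernel of degree $-3$ on $\mathbb R^3$ (its cancellation property on spheres is inherited from being two derivatives of a function homogeneous of degree $-1$), and to rewrite the discrete sum $W_{l,\alpha}$ as a singular integral against a piecewise constant density. The bridge between the pointwise sum and the continuous integral is provided by the mean-value identity of Lemma \ref{mean-value}, while the $\ell^q$ bound will come from the $L^q$-boundedness of Calder\'on--Zygmund operators.

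For (1), fix $l$. Since $|x_l-x_\lambda|\geq d$, the function $y\mapsto\partial^\alpha V(x_l-y)$ is smooth on $B(x_\lambda,d/2)$ and satisfies $\Delta_y[\partial^\alpha V(x_l-y)]=(\partial^\alpha Q)(x_l-y)$. Lemma \ref{mean-value} produces
\[
\partial^\alpha V(x_l-x_\lambda)=\fint_{B(x_\lambda,d/2)}\partial^\alpha V(x_l-y)\,dy+E_{l,\lambda},
\]
with an error $|E_{l,\lambda}|\leq Cd^2/|x_l-x_\lambda|^5$ coming from the $-5$-homogeneity of $\partial^\alpha Q$. Introducing the density $\Phi(y):=\sum_\lambda m_\lambda\,|B(x_\lambda,d/2)|^{-1}\mathbf 1_{B(x_\lambda,d/2)}(y)$---whose support is a disjoint union by the separation assumption \eqref{H2}---one recognizes that the smeared sum assembles into $a^3\,T\Phi(x_l)$, where $T$ is the Calder\'on--Zygmund principal-value operator with kernel $\partial^\alpha V$ (the restriction $|y-x_l|\geq d/2$ can be removed because on $B(x_l,d/2)$ the density $\Phi$ is constant and the spherical mean of $\partial^\alpha V$ vanishes). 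The main part of $W_{l,\alpha}$ is then bounded via the $L^q$-boundedness of $T$ combined with the elementary identity $\|\Phi\|_{L^q}^q\sim d^{3(1-q)}\sum_l|m_l|^q$. To convert the continuous $L^q$ estimate on $T\Phi$ into a discrete $\ell^q$-sum at the $x_l$, one uses that the balls $B(x_l,d/4)$ are disjoint and that $T\Phi$ is regular inside each of them (since $\Phi$ is locally constant near $x_l$ and $T$ applied to a constant piece has removable singularity at the center by CZ cancellation), so that $|T\Phi(x_l)|^q$ can be compared to an average of $|T\Phi|^q$ over $B(x_l,d/4)$. The remainder $a^3\sum_{\lambda\neq l}m_\lambda E_{l,\lambda}$, carried by the more rapidly decaying kernel $|\cdot|^{-5}$, is controlled by a direct Schur test and contributes only at strictly lower order.

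For (2), the observation is that on $B(x_l,2a)$ the hypothesis $d>4a$ gives $|x-x_\lambda|\geq|x_l-x_\lambda|/2$ for $\lambda\neq l$, whence $|\nabla^2V(x-x_\lambda)|\leq C|x_l-x_\lambda|^{-3}$ and therefore
\[
|\nabla F_l(x)|\leq C\sum_{\lambda\neq l}\frac{|m_\lambda|}{|x_l-x_\lambda|^3}.
\]
Integrating this pointwise bound over $B(x_l,2a)$ only produces a factor $|B(x_l,2a)|\sim a^3$; summing the resulting expression over $l$, one obtains (up to the missing prefactor $a^3$) exactly the $\ell^2$-sum that part (1) with $q=2$ and $|\alpha|=2$ estimates, and the $a^3$ factor cancels against the $(a/d)^3$ coming out of (1) to leave the desired $d^{-3}\sum_l|m_l|^2$.

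The main obstacle in this scheme is the discrete-to-continuous transfer in part (1): ensuring that the $L^q$-bound on $T\Phi$ translates into an $\ell^q$-bound on the pointwise samples $T\Phi(x_l)$ with the correct power of $d$. This step ultimately relies on the Calder\'on--Zygmund cancellation of $\partial^\alpha V$, which guarantees that $T$ applied to the local constant piece $\Phi|_{B(x_l,d/2)}=\tilde m_l$ is regular at $x_l$ and does not contaminate the averaging argument over $B(x_l,d/4)$. Once this is secured, the remaining error analyses (both the Lemma~\ref{mean-value} remainder in (1) and the Taylor reduction in (2)) are routine applications of Schur-type tests on summable kernels.
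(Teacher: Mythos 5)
Your treatment of item (1) follows essentially the same route as the paper: smear the point masses over the balls $B(x_\lambda,d/2)$ via the mean-value identity of Lemma \ref{mean-value}, recognize a Calder\'on--Zygmund operator acting on the piecewise-constant density $\Phi$, and transfer the $L^q$ bound back to the discrete samples. The one step you leave vague --- comparing $T\Phi(x_l)$ with an average of $|T\Phi|^q$ over $B(x_l,d/4)$ --- is not secured by the cancellation of $T$ on the local constant piece alone: what must actually be controlled is the oscillation over that ball of the contribution of the \emph{far} part of $\Phi$, which requires an extra gradient/annulus estimate (this is the role of the term $R_{l,\alpha}$ and of the factor $\ln\bigl(\frac{d+2a}{d-2a}\bigr)$ in the paper, where the truncation centre is moved from $x_l$ to the running point $y$). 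That estimate is a routine positive-kernel Schur test on the summable kernel $\mathbf 1_{|z|>d/4}|z|^{-4}$, so item (1) is fixable as sketched.

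Item (2), however, has a genuine gap. You bound $|\nabla F_l(x)|\leq C\sum_{\lambda\neq l}|m_\lambda|\,|x_l-x_\lambda|^{-3}$ and then claim this reduces to item (1) with $q=2$. It does not: item (1) estimates the \emph{signed} sum $\sum_{\lambda\neq l}m_\lambda\,\partial^\alpha V(x_l-x_\lambda)$ and relies crucially on the vanishing spherical mean of $\partial^\alpha V$; once absolute values are placed on both the weights and the kernel, the kernel $|z|^{-3}$ is no longer of Calder\'on--Zygmund type and the corresponding discrete operator is \emph{not} bounded on $\ell^2$ uniformly in $N$. Concretely, for $N\sim d^{-3}$ points on a grid of spacing $d$ in a unit cube with all $m_\lambda=1$, one has $\sum_{\lambda\neq l}|x_l-x_\lambda|^{-3}\sim d^{-3}\ln(1/d)$, so that
\[
a^3\sum_{l=1}^N\Bigl(\sum_{\lambda\neq l}\frac{|m_\lambda|}{|x_l-x_\lambda|^{3}}\Bigr)^2\;\sim\;\Bigl(\frac{a}{d}\Bigr)^3\ln^2(1/d)\cdot d^{-3}\sum_{l=1}^N|m_l|^2,
\]
which exceeds the claimed bound by a factor of order $\ln^2 N$ as $N\to\infty$ at fixed $a/d$. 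This is exactly the logarithmic loss that the lemma is designed to avoid. The repair is to keep the cancellation: either Taylor-expand $\nabla^2V(x-x_\lambda)$ around $x_l$ so that the leading term is the signed sum covered by item (1) with $|\alpha|=2$ (the Taylor remainder carries the harmless summable kernel $a|z|^{-4}$), or, as the paper does, represent $F_l$ on $B(x_l,2a)$ by the integral formula coming from Lemma \ref{mean-value} and apply the Calder\'on--Zygmund bound to $\partial^\alpha V$, $|\alpha|=2$, together with a Young inequality for the $\partial^\alpha Q$ correction, which decays like $|z|^{-5}$.
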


\begin{proof} We split the proof into two parts corresponding to the two items
in the lemma.\\

\textbf{Part 1.}
This part is the proof of the first statement of the above lemma.
By definition of $Q,$ we notice that $\Delta\p^\alpha V=\p^\alpha Q$ in $\RR^3\setminus\{0\}$. Hence according to Lemma \ref{mean-value} and the fact that $\p^{\alpha}Q$ is harmonic in $\RR^3\setminus\{0\}$, we obtain that 
\beq
W_{l,\alpha}=W^{1}_{l,\alpha}+W^2_{l,\alpha}+W^{3}_{l,\alpha},
\enq
where
\beq
W^{1}_{l,\alpha}:=a^{3}\fint_{B(x_l,a)}\big(\sum_{l\neq \lambda}\fint_{B(x_\lambda,d/2)}m_{\lambda}\p^{\alpha}V(y-z)dz\big)dy,
\enq
\beq
W^{2}_{l,\alpha}:=\dfrac{a^3}{3}\int_0^a\left(\frac{r^4}{a^3}-r\right)\big(\sum_{l\neq\lambda}\fint_{B(x_l,r)}m_{\lambda}\p^\alpha Q(y-x_\lambda)dy\big)dr
\enq
and
\beq
W^{3}_{l,\alpha}:=\dfrac{a^3}{3}\fint_{B(x_l,a)}\big(\sum_{l\neq\lambda}\int_0^{\frac{d}{2}}\left(\frac{8r^4}{d^3}-r\right)\fint_{B(x_\lambda,r)}m_{\lambda}\p^\alpha Q(y-z)dzdr\big)dy.
\enq
For the next computations, we introduce:
\beqq\label{Phi}
\Phi(x):=\sum_{l=1}^N m_{l}\mathbf{1}_{B(x_l,d/2)}(x)
\enqq
and fix $q \in (1,\infty).$

\textbf{Step 1.}  In this part we deal with $W^1_{l,\alpha}$. By definition, we have that 
\beq
W^1_{l,\alpha}=\frac{9}{2\pi^2 d^3}\int_{B(x_l,a)}\int_{\RR^3\setminus B(x_l,d/2)}\Phi(z)\p^{\alpha}V(y-z)dzdy.
\enq
In order to apply  Calder\'on-Zygmund  inequality, we split the above quantity into two parts: $W^1_{l,\alpha}=C_{l,\alpha}+R_{l,\alpha},$
where
\begin{align*}
C_{l,\alpha}& :=\frac{9}{2\pi^2 d^3}\int_{B(x_l,a)}\int_{\RR^3\setminus B(y,d/2)}\Phi(z)\p^{\alpha}V(y-z)dzdy, \\
R_{l,\alpha}& :=\frac{9}{2\pi^2 d^3}\int_{B(x_l,a)}\Big(\int_{\RR^3\setminus B(x_l,d/2)}-\int_{\RR^3\setminus B(y,d/2)}\Big)\Phi(z)\p^{\alpha}V(y-z)dzdy.
\end{align*}
We note that for any $y\in \RR^3$,
\beq
\int_{\RR^3\setminus B(y,d/2)}\Phi(z)\p^{\alpha}V(y-z)dz=\int_{\RR^3\setminus B(0,d/2)}\Phi(y-z)\p^\alpha V(z)dz:=G_{\alpha}(y),
\enq
which implies that 
\beq
C_{l,\alpha}=\frac{9}{2\pi^2 d^3}\int_{B(x_l,a)}G_\alpha(y)dy.
\enq
Therefore, we have:
\beq
\big|C_{l,\alpha}\big|\leq C a^{3-3/q}d^{-3}\|G_\alpha\|_{L^{q}(B(x_l,a))},
\enq
and
\beq
\Big(\sum_{l=1}^N \big|C_{l,\alpha}\big|^q\Big)^{1/q}\leq Ca^{3-3/q}d^{-3}\|G_\alpha\|_{L^{q}(\RR^3)}.
\enq
On the other hand, by Calder\'on-Zygmund inequality we have:
\beq
\|G_{\alpha}\|_{L^q(\RR^3)}\leq C(q)\|\Phi\|_{L^q(\RR^3)}\leq C(q)d^{\frac{3}{q}}\Big(\sum_{l=1}^N |m_l|^q\Big)^{1/q}.
\enq
Hence we obtain that:
\beqq\label{Cl}
\Big(\sum_{l=1}^N\big|C_{l,\alpha}\big|^q\Big)^{1/q}\leq C(q)\big(\frac{a}{d}\big)^{3-3/q}\Big(\sum_{l=1}^N |m_l|^q\Big)^{1/q}.
\enqq

Now we turn to deal with $R_{l,\alpha}$. At first, we notice that for any $l=1,\dots,N$ and $x\in B(x_l,a)$, there holds:
\beq
B(x,d/2)\triangle B(x_l,d/2)\subset B(x,d/2+a)\setminus B(x,d/2-a),
\enq
(where $\triangle$ represents the symmetric difference between sets).
Since $\partial^{\alpha} V$ is $-3$-homogeneous, this implies that for any $y\in B(x_l,a)$, we have:
\beq
\Big|\Big(\int_{\RR^3\setminus B(x_l,d/2)}-\int_{\RR^3\setminus B(y,d/2)}\Big)\Phi(z)\p^{\alpha}V(y-z)dz\Big|
\leq \int_{B(y,d+2a)\setminus B(y,d-2a)}\big|\Phi(z)\frac{1}{|y-z|^3}\big|dz.
\enq
We denote $\bar{G}_\alpha(y)$ the right-hand side of this inequality.
Again by H\"older inequality, we obtain that 
\beq
\big|R_{l,\alpha}\big|\leq C a^{3-3/q}d^{-3}\|\bar{G}_\alpha\|_{L^q(B(x_l,a))},
\enq
and
\beq
\Big(\sum_{l=1}^N\big|R_{l,\alpha}\big|^q\Big)^{1/q}\leq C a^{3-3/q}d^{-3}\|\bar{G}_\alpha\|_{L^q(\RR^3)}.
\enq
On the other hand, by a standard Young inequality for convolution, we have
\beq
\|\bar{G}_{\alpha}\|_{L^q(\RR^3)}&=&\Big\|\int_{\RR^3}|\Phi(\cdot-z)|\frac{\mathbf{1}_{B(0,d+2a)\setminus B(0,d-2a)}(z)}{|z|^3}dz\Big\|_{L^q(\RR^3)}\\
&\leq&C\ln\big(\frac{d+2a}{d-2a}\big)\|\Phi\|_{L^q(\RR^3)}\leq C\ln\big(\frac{d+2a}{d-2a}\big)d^{3/q}\Big(\sum_{l=1}^N|m_l|^q\Big)^{1/q}.
\enq
Therefore we obtain that 
\beqq\label{Rl}
\Big(\sum_{l=1}^N\big|R_{l,\alpha}\big|^q\Big)^{1/q}\leq C\ln\big(\frac{d+2a}{d-2a}\big)\big(\frac{a}{d}\big)^{3-3/q}\Big(\sum_{l=1}^N|m_l|^q\Big)^{1/q}.
\enqq
By combing \reff{Cl} and \reff{Rl}, we obtain finally that,
\beqq\label{W1l}
\Big(\sum_{l=1}^N\big|W^1_{l,\alpha}\big|^q\Big)^{1/q}\leq C(q)\Big(1+\ln\big(\frac{d+2a}{d-2a}\big)\Big)\big(\frac{a}{d}\big)^{3-3/q}\Big(\sum_{l=1}^N|m_l|^q\Big)^{1/q}.
\enqq

\textbf{Step 2.} Now we turn to handle $W^2_{l,\alpha}$. We recall that  
\beq
W^{2}_{l,\alpha}=\dfrac{a^3}{3}\int_0^a\left(\frac{r^4}{a^3}-r\right)\big(\sum_{l\neq\lambda}\fint_{B(x_l,r)}m_{\lambda}\p^\alpha Q(y-x_\lambda)dy\big)dr.
\enq
Since $\p^{\alpha}Q$ is harmonic outside $B(0,a)$, we have for each $\lambda\neq l$ and $r<a$,
\beq
\fint_{B(x_l,r)}m_{\lambda}\p^{\alpha}Q(y-x_\lambda)dy=\fint_{B(x_l,a)}\fint_{B(x_\lambda,d/2)}m_{\lambda}\p^{\alpha}Q(y-z)dzdy,
\enq
which implies that 
\beq
\sum_{l\neq\lambda}\fint_{B(x_l,r)}m_{\lambda}\p^\alpha Q(y-x_\lambda)dy&=&\sum_{l\neq\lambda}\fint_{B(x_l,a)}\fint_{B(x_\lambda,d/2)}m_{\lambda}\p^{\alpha}Q(y-z)dzdy\\
&=&\frac{9}{2\pi^2a^3d^3}\int_{B(x_l,a)}\int_{\RR^3\setminus B(x_l,d/2)}\Phi(z)\p^\alpha Q(y-z)dzdy.
\enq
We also notice that  
\beq
\big|\Phi(z)\p^\alpha Q(y-z)\big|\leq C|\Phi(z)|\frac{1}{|y-z|^5}.
\enq
Therefore we obtain that 
\beq
\big|W^{2}_{l,\alpha}\big|\leq Cd^{-3}\int_0^a rdr \int_{B(x_l,a)} \int_{\RR^3\setminus B(x_l,d/2)}|\Phi(z)| \dfrac{1}{|y-z|^5}dzdy.
\enq
By a similar argument as before, we obtain that 
\beq
\big|W^2_{l,\alpha}\big|&\leq& C a^2d^{-3}\int_{B(x_l,a)}\int_{\RR^3\setminus B(y,d/2-a)}|\Phi(z)|\frac{1}{|y-z|^5}dzdy\\
&\leq& a^{5-3/q}d^{-3}\big\|\int_{\RR^3\setminus B(0,d/2-a)}|\Phi(\cdot-z)|\frac{1}{|z|^5}dz\big\|_{L^q(B(x_l,a))},
\enq
which implies that, since $a^3/d^3  < \varepsilon_0 <<1$:
\beqq\label{W2l}
\begin{split}
\Big(\sum_{l=1}\big|W^2_{l,\alpha}\big|^q\Big)^{1/q}&\leq C a^{5-3/q}d^{-3}\big\|\int_{\RR^3\setminus B(0,d/2-a)}|\Phi(\cdot-z)|\frac{1}{|z|^5}dz\big\|_{L^q(\RR^3)}\\
&\leq C(q,\varepsilon_0)\big(\frac{a}{d}\big)^{5-\frac{3}{q}}\Big(\sum_{l=1}^N|m_l|^q\Big)^{1/q}.
\end{split}
\enqq

\textbf{Step 3.} At last we deal with $W^3_{l,\alpha}$. We recall that 
\beq
W^{3}_{l,\alpha}=\dfrac{a^3}{3}\fint_{B(x_l,a)}\big(\sum_{l\neq\lambda}\int_0^{\frac{d}{2}}\left(\frac{8r^4}{d^3} - r\right)\fint_{B(x_\lambda,r)}m_{\lambda}\p^\alpha Q(y-z)dzdr\big)dy.
\enq
Again by the fact that $\p^{\alpha}Q$ is harmonic in $\RR^3\setminus\{0\}$ for any $|\alpha|=2$, we obtain that for $y\in B(x_l,a),$  $l\neq\lambda$
 and $r<d/2$:
\beq
\fint_{B(x_\lambda,r)}m_{\lambda}\p^\alpha Q(y-z)dz=\fint_{B(x_\lambda,d/2)}m_{\lambda}\p^\alpha Q(y-z)dz.
\enq
By a similar argument as in step 2, we obtain that 
\beq
\big|W^3_{l,\alpha}\big|&\leq& C d^{-3}\int_{B(x_l,a)}\int_0^{d/2}rdr\sum_{l\neq\lambda}\int_{B(x_\lambda,d/2)}|\Phi(z)|\frac{1}{|y-z|^5}dzdy\\
&\leq& Cd^{-1}\int_{B(x_l,a)}\sum_{l\neq\lambda} \int_{B(x_\lambda,d/2)}|\Phi(z)|\frac{1}{|y-z|^5}dzdy\\
&\leq&C d^{-1}\int_{B(x_l,a)}\int_{\RR^3\setminus B(y,d/2-a)}|\Phi(z)|\frac{1}{|y-z|^5}dzdy\\
&\leq& C a^{3-3/q}d^{-1}\big\|\int_{\RR^3\setminus B(0,d/2-a)}|\Phi(\cdot-z)|\frac{1}{|z|^5}dz\big\|_{L^q(B(x_l,a))},
\enq
and we conclude that:
\beqq\label{W3l}
\begin{split}
\Big(\sum_{l=1}\big|W^3_{l,\alpha}\big|^q\Big)^{1/q}&\leq C a^{3-3/q}d^{-1}\big\|\int_{\RR^3\setminus B(0,d/2)}|\Phi(\cdot-z)|\frac{1}{|z|^5}dz\big\|_{L^q(\RR^3)}\\
&\leq C(q)\big(\frac{a}{d}\big)^{3-\frac{3}{q}}\Big(\sum_{l=1}^N|m_l|^q\Big)^{1/q}.
\end{split}
\enqq
By combining  \reff{W1l}, \reff{W2l} and \reff{W3l}, we obtain the
expected result since $a/d < \varepsilon_0 <<1$:
\beq 
\big(\sum_{l=1}^N\Big|W_{l,\alpha}\Big|^q\big)^{1/q}&\leq& \big(\sum_{l=1}^N\Big|W^{1}_{l,\alpha}\Big|^q\big)^{1/q}+\big(\sum_{l=1}^N\Big|W^{2}_{l,\alpha}\Big|^q\big)^{1/q}+\big(\sum_{l=1}^N\Big|W^{3}_{l,\alpha}\Big|^q\big)^{1/q}\\
&\leq& C(q,\varepsilon_0)\big(\frac{a}{d}\big)^{3-\frac{3}{q}}\big(\sum_{l=1}^N|m_{l}|^q\big)^{1/q}.
\enq
The first statement of the lemma is proved.

\medskip

\textbf{Part 2.} In this part we give a proof for the second item. By definition of $F_l$ and Lemma \ref{mean-value}, we have that for any $x\in B(x_l,2a)$
\beq
F_l(x)&=&\sum_{l\neq\lambda} m_\lambda\fint_{B(x_\lambda,d/2)}\nabla V(x-y)dy+ \dfrac{1}{3} \sum_{l\neq\lambda}m_\lambda\int_0^{\frac{d}{2}}\left(\frac{8r^4}{d^3} - r\right)\fint_{B(x_\lambda,r)}\nabla Q(x-y)dydr.
\enq
According to the fact that $Q$ is harmonic outside the origin, the second term on the right side of the above equation can be written as
\begin {multline*}
\sum_{l\neq\lambda}m_\lambda\int_0^{\frac{d}{2}}\left(\frac{8r^4}{d^3}-r\right)\fint_{B(x_\lambda,r)}\nabla Q(x-y)dydr \\
\begin{aligned}
&=\sum_{l\neq\lambda}m_\lambda\int_0^{\frac{d}{2}}\left(\frac{8r^4}{d^3}-r\right)\fint_{B(x_\lambda,d/2)}\nabla Q(x-y)dydr\\
&=\frac{33}{40\pi d}\sum_{l\neq\lambda} m_{\lambda}\int_{B(x_l,d/2)}\nabla Q(x-y)dy,
\end{aligned}
\end{multline*}
for any $x\in B(x_l,2a)$. Therefore for any $x\in B(x_l,2a)$, we have:
\beq
F_l(x)=\frac{6}{\pi d^3}\int_{\RR^3\setminus B(x_l,d/2)}\Phi(y)\nabla V(x-y)dy+\frac{11}{40 \pi d}\int_{\RR^3\setminus B(x_l,d/2)}\Phi(y)\nabla Q(x-y)dy,
\enq
where $\Phi$ is defined in \reff{Phi}.

\medskip

Now we start to prove \reff{Fl-H^1}. By the above argument and since the $B(x_l,2a)$ are disjoint, we have
\begin{multline}\label{F_l}
\sum_{l=1}^N\|\nabla F_l\|^2_{L^2(B(x_l,2a))}\leq C \dfrac{1}{d^{6}}\sum_{|\alpha|=2}\sum_{l=1}^N\|\int_{\RR^3\setminus B(x_l,d/2)}\Phi(y)\partial^\alpha V(\cdot-y)dy\|_{L^2(B(x_l,2a))}^2\\
+\dfrac{1}{d^{2}}\sum_{|\alpha|=2}\sum_{l=1}^N\|\int_{\RR^3\setminus B(x_l,d/2)}\Phi(y)\partial^\alpha Q(\cdot-y)dy\|_{L^2(B(x_l,2a))}^2.
\end{multline}
In order to control the right-hand side of the above inequality, we first notice that for each $l=1,\dots,N,$ any $|\alpha|=2$ and $x \in B(x_l,2a):$
\beq
\int_{\RR^3\setminus B(x_l,d/2)}\Phi(y)\partial^\alpha V(x-y)dy&=&\int_{\RR^3\setminus B(0,d/2)}\Phi(x-y)\partial^\alpha V(y)dy\\
&&+\Big(\int_{\RR^3\setminus B(x_l,d/2)}-\int_{\RR^3\setminus B(x,d/2)}\Big)\Phi(y)\partial^\alpha V(x-y)dy.
\enq

By  similar arguments as in \textbf{Part 1} of the proof, we obtain that 
\beq
\begin{split}
\sum_{|\alpha|=2}\sum_{l=1}^N\|\int_{\RR^3\setminus B(0,d/2)}\Phi(\cdot-y)\partial^\alpha V(y)dy\|_{L^2(B(x_l,2a))}^2\leq Cd^3\sum_{l=1}^N|m_l|^2
\end{split}
\enq
and
\beq
\begin{split}
\sum_{|\alpha|=2}\sum_{l=1}^N&\|\Big(\int_{\RR^3\setminus B(x_l,d/2)}-\int_{\RR^3\setminus B(x,d/2)}\Big)\Phi(y)\partial^\alpha V(x-y)dy\|^2_{L^2(B(x_l,2a))}\\
\leq&C\left| \ln\Bigl(\frac{d+2a}{d-2a}\Bigr)\right|^{2}d^{3}\sum_{l=1}^N|m_l|^2
\end{split}
\enq
Therefore we have
\begin{multline}\label{F_l-N}
\sum_{|\alpha|=2}\sum_{l=1}^N\|\int_{\RR^3\setminus B(x_l,d/2)}\Phi(y)\partial^\alpha V(\cdot-y)dy\|_{L^2(B(x_l,2a))}^2 \\
\leq Cd^3\left(1+\left|\ln\Bigl(\frac{d+2a}{d-2a}\right|^2\Bigr)\right)\sum_{l=1}^N|m_l|^2. \phantom{12345678}
\end{multline}
Now we turn to deal with the second term of the right side of \reff{F_l}. We first notice that for any $|\alpha|=2,$ $l=1,\ldots,N$ and $x \in B(x_l,2a)$,
there holds:
\beq
\Big|\int_{\RR^3\setminus B(x_l,d/2)}\Phi(y)\partial^\alpha Q(x-y)dy\Big|\leq C\int_{\RR^3}|\Phi(y)|\frac{\mathbf{1}_{|x-y|>d/2-2a}}{|x-y|^5}dy,
\enq
 Since $a^3/d^3 < \varepsilon_0 <<1$  we obtain via a standard Young inequality for convolutions that 
\beqq\label{F_l-E}
\sum_{|\alpha|=2}\sum_{l=1}^N\|\int_{\RR^3\setminus B(x_l,d/2)}\Phi(y)\partial^\alpha Q(\cdot-y)dy\|_{L^2(B(x_l,2a))}^2\leq C(\varepsilon_0)d^{-1}\sum_{l=1}^N|m_l|^2.
\enqq

Finally, combining \reff{F_l}, \reff{F_l-N} and \reff{F_l-E} and remarking that $d\leq diam(K)$, we have that 
\beq
\sum_{l=1}^N\|\nabla F_l\|^2_{L^2(B(x_l,2a))}\leq C(\varepsilon_0)d^{-3}\sum_{l=1}^N|m_l|^2.
\enq
This ends the proof of the second item and the proof of the lemma.
\end{proof}

\end{document}